\newtheorem{theorem}{Theorem}[section] 
\newtheorem{proposition}[theorem]{Proposition} 
\newtheorem{conjecture}[theorem]{Conjecture} 
\newtheorem{corollary}[theorem]{Corollary} 
\newtheorem{lemma}[theorem]{Lemma}
\newtheorem{remark}[theorem]{Remark}
\newcommand{\QQ}{\mathbb{Q}}
\newcommand{\prelie}{\operatorname{PreLie}}
\newcommand{\prelieh}{\operatorname{CycPreLie}}
\newcommand{\lie}{\operatorname{Lie}}
\newcommand{\lieh}{\operatorname{CycLie}}
\newcommand{\freeop}{\operatorname{FreeOp}}
\newcommand{\comm}{\operatorname{Comm}}
\newcommand{\ass}{\operatorname{Assoc}}
\newcommand{\perm}{\operatorname{Perm}}
\newcommand{\refl}{\operatorname{Reflex}}
\newcommand{\indec}{\operatorname{Indec}}
\newcommand{\rela}{\operatorname{Rela}}
\newcommand{\sym}{\mathfrak{S}}
\newcommand{\sous}{\triangleleft}
\newcommand{\bp}[2]{\langle #1\,,\,#2 \rangle}
\newcommand{\bl}[2]{( #1\,,\,#2 )}
\newcommand{\hada}{\odot}
\newenvironment{proof}{\begin{trivlist}\item{\bf{Proof.}}}
  {\hfill\rule{2mm}{2mm}\end{trivlist}}
\title{Fine structures inside the PreLie operad} \author{F. Chapoton} \date{\today}
\begin{document}

\maketitle

\begin{abstract}
  This article aims at a detailed analysis of the $\prelie$ operad. We
  obtain a more concrete description of the relationship between the
  anticyclic structure of $\prelie$ and the generators of $\prelie$ as
  a $\lie$-module, which was known before only at the level of
  characters. Building on this, we obtain an inclusion of the cyclic
  Lie module in the $\prelie$ operad. We conjecture that the image of
  this inclusion generates an interesting free sub-operad.
\end{abstract}

\setcounter{section}{-1}

\section{Introduction}

The aim of this article is to make a step forward in the study a
specific operad, called the PreLie operad, and to unravel some
unexpected relations with the Lie operad. The PreLie operad describes
pre-Lie algebras, which have been useful in various contexts, and were
introduced independently in the works of Gerstenhaber
\cite{gerst_ring} in deformation theory and in the works of Vinberg
\cite{vinberg} on convex homogeneous cones. The PreLie operad itself
has been described in \cite{chapotonlivernet1} in terms of
combinatorial objects called rooted trees. This description is
strongly connected to the use of rooted trees in numerical analysis
\cite{butcher}, because the space of vector fields on the affine space
is naturally a pre-Lie algebra.

Many results are already known on the PreLie operad. It was proved in
\cite{chapotonlivernet1} that it is a Koszul operad, and this result
has been reproved since then by other methods \cite{vallette_poset,hoffbeck}. Another
important property is the fact, first proved by Foissy in \cite{foissy},
that free pre-Lie algebras are free Lie algebras, where the Lie
bracket comes from antisymmetrising the pre-Lie product. Another proof
has been obtained in \cite{free_chapoton}, in the setting of operads rather than
algebras. This property translates into the fact that the PreLie operad
is a free left Lie-module, so that there exists a factorisation
\begin{equation*}
  \prelie = \lie \circ X
\end{equation*}
in the category of $\sym$-modules, where $\circ$ is the composition of $\sym$-modules. Our principal aim is to understand
better the $\sym$-module $X$.

Surprisingly enough, the $\sym$-module $X$ is related to another
structure on the PreLie operad. It was shown in \cite{chap_anticyclic}
that PreLie is an anticyclic operad. This means in particular that the
$\sym$-module $\prelie$ admits a primitive $\prelieh$, which means
that $\prelieh(n+1)\simeq \prelie(n)$ as modules over the symmetric
group $\sym_n$. It has been proved in \cite[Th. 5.3]{free_chapoton}
that for $n\geq 2$ there is an isomorphism of modules over the
symmetric group $\sym_n$,
\begin{equation*}
  X(n)\simeq \refl(n) \otimes \prelieh(n),
\end{equation*}
where $\refl(n)={\QQ^n}/{\QQ}$ is the quotient of the natural action
of $\sym_n$ on $\QQ^n$ by the diagonal subspace. This was proved using
explicit formulas for the characters of these modules. The main result
of the present article is to better explain this relation, by the
definition of an explicit isomorphism. This will be achieved in section
\ref{proofiso}. 

The other main result of the article is the following. It is well
known that the Lie operad is a cyclic operad \cite{kont_formal}, so that in
particular there exists a primitive $\lieh$ to the $\sym$-module
$\lie$. We obtain, using the previous parts of the article, an
injective morphism of $\sym$-modules from $\lieh$ to $\prelie$.

We expect that this inclusion of $\lieh$ into $\prelie$ should have
very nice properties. We conjecture that it generates a free
sub-operad of $\prelie$. A recent result by N. Bergeron and Loday
\cite{bergeron_loday} would be a direct corollary of this more general
conjecture. Moreover, we conjecture that the sub-operad of $\prelie$
generated by the image of $\lieh$ should be isomorphic to $X$ as a
$\sym$-module and should give a distinguished space of generators of
$\prelie$ as a $\lie$-module.

\section{General setting and notations}

We will use the description of operads using $\sym$-modules over
$\QQ$. The reader may want to consult \cite{stasheff_markl} for
further details on the theory of operads.

Recall that the category of $\sym$-modules over $\QQ$ is the category
of functors from the groupoid of finite sets to the category of
$\QQ$-vector spaces. By choosing an equivalent model for the groupoid
of finite sets, $\sym$-modules can also be described as collections of
vector spaces $P(n)$ with an action of the symmetric group $\sym_n$ on
$P(n)$ for $n \geq 0$. We will freely use both descriptions.

We will denote $S_1$ the $\sym$-module corresponding to the trivial
representation of $\sym_1$.

The derivative $P'$ of an $\sym$-module $P$ is defined as follows. For
every $n$, the space $P'(n-1)$ is $P(n)$, with the action of the
symmetric group $\sym_{n-1}$ given by restriction of the action of the
symmetric group $\sym_n$ on $P(n)$. A primitive of a $\sym$-module $P$
is a $\sym$-module $Q$ such that $Q'=P$.

The category of $\sym$-modules is endowed with a nonsymmetric tensor
product $\circ$ which is called the composition. An \textbf{operad} is
a monoid for this tensor product. We will also consider right and left
modules over operads, for the same monoidal structure.

We will also need the symmetric tensor structure $\otimes$ on the
category of $\sym$-modules defined by
\begin{equation*}
  (P \otimes Q)(I)=\bigoplus_{I=J \sqcup K} P(J)\otimes Q(K).
\end{equation*}
Using this tensor product, one can also define exterior and symmetric
powers of $\sym$-modules.

There is a third tensor product on the category of $\sym$-modules,
sometimes called the Hadamard product, defined by
\begin{equation*}
  (P \hada Q)(I)= P(I)\otimes Q(I).
\end{equation*}

There is an equivalent way to describe an operad $\mathcal{P}$, using
partial compositions instead of the global composition map
$\mathcal{P} \circ \mathcal{P} \rightarrow \mathcal{P}$. We will
denote by $\circ_*$ the partial composition at position $*$ in an
operad. Symbols like $*,\#$ will sometimes appear as placeholders for
positions of compositions.

% The following typographic conventions will hold. 
% \begin{itemize}
% \item Letters $a,b,c,d,\dots$ for indices in finite sets.
% \item Letters $u,v,w,x,y,z,\dots$ for elements in $\lie$ or $\prelie$.
% \item Greek letters for morphisms.
% \item Capital letters $R,S,T,\dots$ for rooted trees.
% \end{itemize}

\section{The $\lie$ operad}

Recall that a Lie algebra is a vector space $V$ endowed with an
antisymmetric bracket $(x,y) \mapsto [x,y]$ such that
\begin{equation}
  \label{jacobi}
  [x,[y,z]]+[x,[y,z]]+[x,[y,z]]=0.
\end{equation}
This relation is called the Jacobi identity.

Let $\lie$ be the operad describing Lie algebras.

The operad $\lie$ admits a presentation by generators and relations,
which amounts to the axiomatic description of Lie algebras given
above. There is an antisymmetric generator in two variables, and a
relation between compositions of two generators, given by \eqref{jacobi}.

Recall now that a symmetric bilinear form $\bl{}{}$ on a Lie algebra is
called invariant if
\begin{equation}
  \label{rule_lie}
  \bl{[x,y]}{z}=\bl{x}{[y,z]}.
\end{equation}
This invariance condition is classical and comes from the natural
invariance condition for bilinear forms under group actions.

This notion of invariant bilinear form leads to a structure of cyclic
operad on $\lie$, first introduced by Kontsevich
\cite{kont_formal}. The $\sym$-module $\lieh$ (cyclic Lie module) is
the right $\lie$-module defined by the exact sequence of right
$\lie$-modules
\begin{equation}
  K_L \to S^2 \lie \stackrel{\bl{}{}}{\longrightarrow} \lieh \to 0,
\end{equation}
where $K_L$ is the sub right $\lie$-module of $S^2 \lie$ generated by
the relation \eqref{rule_lie}.

As an $\sym$-module, $\lieh$ is a primitive of $\lie$.

\begin{remark}
  The $\sym$-module $\lieh$ is also called the Whitehouse module.
\end{remark}

The following statement is a general property of cyclic operads.

\begin{proposition}
  Let $I$ be a finite set. For every element $\ell$ in $\lieh(I)$ and
  every element $i \in I$, there is a unique element $\ell_i$ in $\lie(I
  \setminus \{i\})$ such that
  \begin{equation}
    \ell=\bl{i}{\ell_i}.
  \end{equation}
\end{proposition}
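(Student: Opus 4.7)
The plan is to show that the linear map
\begin{equation*}
  \mu_i \colon \lie(I \setminus \{i\}) \to \lieh(I), \qquad m \mapsto \bl{i}{m},
\end{equation*}
is a bijection: existence of $\ell_i$ amounts to surjectivity of $\mu_i$, and uniqueness to its injectivity. This map is well-defined and $\sym_{I \setminus \{i\}}$-equivariant because the bilinear form $\bl{}{}$ descends from $S^2 \lie$ to $\lieh$ by construction.

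For surjectivity, I would start from a representative $\sum_\alpha \bl{a_\alpha}{b_\alpha}$ in $S^2 \lie(I)$ of a given $\ell \in \lieh(I)$, with $a_\alpha \in \lie(J_\alpha)$ and $b_\alpha \in \lie(K_\alpha)$, $J_\alpha \sqcup K_\alpha = I$. Using the symmetry of the bilinear form, I can assume $i \in J_\alpha$ for every term. When $J_\alpha = \{i\}$, the element $a_\alpha$ is a scalar multiple of the generator in $\lie(\{i\})$ and the term already has the desired form $\bl{i}{b_\alpha}$. When $|J_\alpha| \geq 2$, I would invoke the classical fact that $\lie(J_\alpha)$ is linearly spanned by the left-iterated brackets $[[\dots[i,j_1],\dots],j_r]$ with $\{j_1,\dots,j_r\} = J_\alpha \setminus \{i\}$. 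Rewriting each $a_\alpha$ in this form and repeatedly applying the invariance relation \eqref{rule_lie}, I move every $j_k$ across to the right-hand slot, producing a term of the form $\bl{i}{c_\alpha}$ with $c_\alpha \in \lie(I \setminus \{i\})$. Summing the $c_\alpha$ yields an element $\ell_i$ with $\mu_i(\ell_i) = \ell$ in $\lieh(I)$.

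For injectivity, I would invoke the fact already recalled in the paper that $\lieh$ is a primitive of $\lie$ as an $\sym$-module, whence $\dim \lieh(I) = \dim \lie(I \setminus \{i\})$. A surjective linear map between finite-dimensional vector spaces of equal dimension is automatically bijective, which yields the uniqueness of $\ell_i$.

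The delicate point will be surjectivity: one must check that the construction of $\ell_i$ is genuinely well-defined, independently both of the chosen representative of $\ell$ in $S^2 \lie(I)$ and of the particular Dynkin-type expansion of each $a_\alpha$. Independence of the representative follows from the very definition $\lieh = S^2 \lie / K_L$, since $K_L$ is generated by exactly the invariance relation used in the rewriting. Independence of the expansion follows because two Dynkin-type expansions of the same Lie element differ by Jacobi relations in $\lie$, which are preserved throughout the computation. Once these two points are verified, the dimension argument closes the proof cleanly.
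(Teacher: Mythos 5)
The paper offers no proof of this proposition at all: it simply invokes it as "a general property of cyclic operads," i.e.\ it relies on the fact (due to Kontsevich, in the Getzler--Kapranov framework) that the presentation of $\lieh$ by the invariant form recovers the cyclic structure, under which $\lieh(I)$ is by construction identified with $\lie(I\setminus\{i\})$ for any marked point $i$. Your argument is therefore a genuinely different, more elementary route: a direct verification for $\lie$. It is essentially correct. The surjectivity step is sound and does not even require the well-definedness discussion you flag at the end --- all the rewriting via \eqref{rule_lie} takes place modulo $K_L$, so whatever choices of representative and of Dynkin-type expansion you make, you still land on \emph{some} preimage of $\ell$ under $\mu_i$, which is all surjectivity needs; the choices only affect which preimage, and uniqueness is then handled separately. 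The one point you should make explicit is that your injectivity step leans on $\dim\lieh(I)=\dim\lie(I\setminus\{i\})$, i.e.\ on the assertion "$\lieh$ is a primitive of $\lie$" stated just above the proposition. Within the paper's logical order you are entitled to use it, but in the general theory that dimension count is usually \emph{obtained from} the identification your proposition asserts, so a fully self-contained proof would need an independent computation of $\dim\lieh(I)=(|I|-2)!$ (e.g.\ via the character of the Whitehouse module, or by exhibiting $(|I|-2)!$ independent elements of $\lieh(I)$). With that caveat recorded, your approach buys an explicit algorithm for computing $\ell_i$ (iterated transport of left-normed brackets across the form), which the paper's appeal to general theory does not provide, at the cost of invoking the spanning property of left-normed brackets in $\lie(J)$.
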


% utile pour Prop. 8.2.

For example, if $\ell=\bl{[x,y]}{z}$ and $i=x$ then $\ell_i=[y,z]$ by
\eqref{rule_lie}.

\section{The $\prelie$ operad}

Recall that a pre-Lie algebra is a vector space $V$ endowed with an
bilinear map $(x,y) \mapsto x \sous y$ such that
\begin{equation}
  \label{axiom_prelie}
  (x \sous y) \sous z - x \sous (y \sous z) = (x \sous z) \sous y - x \sous (z \sous y). 
\end{equation}

Let $\prelie$ be the operad describing pre-Lie algebras.

The operad $\prelie$ admits a combinatorial description using rooted
trees \cite{chapotonlivernet1}. Let us recall this briefly here.

For a finite set $I$, a rooted tree on $I$ is a connected and simply
connected graph with vertex set $I$, together with a distinguished
vertex called the root. One can then orient every edge towards the
root. Then $\prelie(I)$ is the vector space spanned by rooted trees on
$I$. The composition of two rooted trees $S\circ_i T$ is the sum of
all rooted trees obtained from the disjoint union of $S \setminus
\{i\}$ and $T$ by adding one edge for every edge that was incident to
$i$ in $S$, as follows. Edges that were incoming at $i$ must keep the
same start and end at some vertex of $T$. The edge that was outgoing
at $i$ (if it exists) must keep the same end and start at the root
of $T$. The root of every tree in the sum is the unique vertex with no
outgoing edge.

The operad $\prelie$ admits a presentation by generators and relations,
which amounts to the axiomatic description of pre-Lie algebras given
above. There is a generator in two variables, and a
relation between compositions of two generators, given by \eqref{axiom_prelie}.

From the description above by rooted trees of the operad $\prelie$,
one can get the following rule for the product $ S \sous T$ of two
rooted trees $S$ and $T$: this is the sum of all rooted trees obtained
from the disjoint union of $S$ and $T$ by adding one edge from the
root of $T$ to a vertex of $S$. The root of each tree in this sum is
the root of $S$.

Recall now \cite[\S\,5.3]{chap_anticyclic} that an antisymmetric bilinear form $\bp{}{}$ on a pre-Lie algebra is
called invariant if
\begin{equation}
  \label{rule_prelie1}
  \bp{x}{y\sous z}=-\bp{z}{y \sous x}
\end{equation}
and
\begin{equation}
  \label{rule_prelie2}
  \bp{x}{y\sous z}=\bp{y}{x \sous z-z \sous x}.
\end{equation}
This invariance condition is less classical than its Lie analogue, but
has been used in the study of left-invariant affine and symplectic
structures on Lie groups. It is also related to the notion of
quasi-Frobenius Lie algebra.

Note that \eqref{rule_prelie1} is a consequence of
\eqref{rule_prelie2} and that these relations also imply the following
relation:
\begin{equation}
  \label{rule_prelie3}
  \bp{x}{y\sous z}+\bp{y}{z\sous x}+\bp{z}{x\sous y}=0.
\end{equation}

This notion of invariant bilinear form leads to a structure of
anticyclic operad on $\prelie$, introduced in \cite{chap_anticyclic}. The anticyclic
$\sym$-module $\prelieh$ is the right $\prelie$-module defined by the
exact sequence of right $\prelie$-modules
\begin{equation}
  K_{P} \to \Lambda^2 \prelie \stackrel{\bp{}{}}{\longrightarrow} \prelieh \to 0,
\end{equation}
where $K_P$ is the sub right $\prelie$-module of $\Lambda^2 \prelie$
generated by the relations \eqref{rule_prelie1} and \eqref{rule_prelie2}.

As an $\sym$-module, $\prelieh$ is a primitive of $\prelie$.

The following statement is a general property of anticyclic operads.

\begin{proposition}
  \label{def_gamma_prelie}
  Let $I$ be a finite set. For every $t\in\prelieh(I)$ and
  every $i \in I$, there is a unique $\Gamma_i(t) \in
  \prelie(I \setminus \{i\})$ such that
  \begin{equation}
    t=\bp{i}{\Gamma_i(t)}.
  \end{equation}
\end{proposition}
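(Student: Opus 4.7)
The plan is to recognise that the statement is equivalent to asserting that the $\sym_{I \setminus \{i\}}$-equivariant linear map
\[
\phi_i : \prelie(I \setminus \{i\}) \to \prelieh(I), \qquad \ell \mapsto \bp{i}{\ell},
\]
is a bijection: existence of $\Gamma_i(t)$ corresponds to surjectivity of $\phi_i$, and uniqueness to injectivity. Since the excerpt has already asserted that $\prelieh$ is a primitive of $\prelie$, the two sides have the same finite dimension, so it will suffice to prove surjectivity.

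For surjectivity I would start from a typical representative of an element of $\prelieh(I)$, namely a term $\bp{a}{b}$ with $a \in \prelie(J)$, $b \in \prelie(K)$ and $J \sqcup K = I$, and show that modulo \eqref{rule_prelie1} and \eqref{rule_prelie2} it equals some $\bp{i}{\ell}$. The main tool is the consequence \eqref{rule_prelie3}
\[
\bp{x}{y\sous z}+\bp{y}{z\sous x}+\bp{z}{x\sous y}=0,
\]
which transfers a variable across the pairing at the cost of a single pre-Lie product. The argument proceeds by induction on the size of the side not containing $i$: when that side is the singleton $\{i\}$ the element is already in the required form (possibly after an antisymmetry swap provided by $\Lambda^2$); otherwise, the rooted-tree description of $\prelie$ lets one write the pre-Lie element on the $i$-free side as a combination of non-trivial pre-Lie products $y \sous z$, and applying \eqref{rule_prelie3} via the right $\prelie$-module structure on $\prelieh$ then reduces the inductive measure.

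Uniqueness follows immediately once surjectivity is in hand: $\phi_i$ is a surjective linear map between vector spaces of equal finite dimension, hence bijective, so $\Gamma_i(t)$ is determined uniquely by $t$.

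The main obstacle I anticipate is making the induction for surjectivity genuinely well-founded. The cyclic rewrite produced by \eqref{rule_prelie3} preserves the total cardinality of the two sides of $\bp{}{}$, so one cannot induct naively on $|I|$; one needs a subtler complexity measure, for example the lexicographic pair consisting of the size of the $i$-free side and some combinatorial invariant of the rooted tree sitting on the $i$-containing side, to guarantee that iterating the rewrite really does deposit $i$ into the left slot. A cleaner alternative would be to bypass the induction altogether by writing $\Gamma_i$ as an explicit combinatorial operation on rooted trees, and then verifying directly that it inverts $\phi_i$.
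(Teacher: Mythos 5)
The paper itself offers no proof of this proposition: it is stated as ``a general property of anticyclic operads,'' with the content delegated to \cite{chap_anticyclic}. So you are attempting something the paper does not do, namely a direct proof from the presentation $\prelieh=\Lambda^2\prelie/K_P$, and your global strategy (show $\ell\mapsto\bp{i}{\ell}$ is surjective, then conclude bijectivity from equality of dimensions) is the right shape for such a proof.

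However, your surjectivity argument has a genuine gap, and it is worse than the need for ``a subtler complexity measure'' that you flag. The relation \eqref{rule_prelie3} is invariant under cyclic permutation of $(x,y,z)$, so it can never privilege the index $i$: applied to $\bp{a}{y\sous z}$ with $i$ in the indices of $a$, it yields $-\bp{y}{z\sous a}-\bp{z}{a\sous y}$, which pushes $i$ \emph{deeper} into the right-hand slot, and iterating it just cycles (try $\bp{j}{k\sous i}$: repeated use of \eqref{rule_prelie3} returns you to where you started and never produces $\bp{i}{\ell}$). The correct move is to decompose the side \emph{containing} $i$, not the $i$-free side, and to use \eqref{rule_prelie1} and \eqref{rule_prelie2} themselves rather than their weaker consequence \eqref{rule_prelie3}. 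Concretely: by antisymmetry assume $i$ lies in the index set $J$ of the left argument of $\bp{a}{b}$ and induct on $|J|$. If $|J|=1$ then $a=i$ and you are done. If $|J|\geq 2$, since $\prelie$ is generated as an operad by $\sous$, write $a=\sum_\alpha u_\alpha\sous v_\alpha$, so $\bp{a}{b}=-\sum_\alpha\bp{b}{u_\alpha\sous v_\alpha}$; for each term, if $i$ lies in $u_\alpha$ then \eqref{rule_prelie2} gives $\bp{b}{u\sous v}=\bp{u}{b\sous v-v\sous b}$, and if $i$ lies in $v_\alpha$ then \eqref{rule_prelie1} gives $\bp{b}{u\sous v}=-\bp{v}{u\sous b}$; either way the side containing $i$ strictly shrinks, so the induction is well founded. (The paper's example $\Gamma_z(\bp{x}{y\sous z})=-y\sous x$ is exactly one step of this.) Finally, note that your uniqueness step borrows the equality $\dim\prelieh(I)=\dim\prelie(I\setminus\{i\})$ from the assertion that $\prelieh$ is a primitive of $\prelie$; in the standard development that assertion is essentially equivalent to the proposition being proved, so your argument is only as non-circular as the independent source you take it from.
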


For example, if $t=\bp{x}{y\sous z}$ then $\Gamma_z(t)=- y
\sous x$ by \eqref{rule_prelie1}.

Here is a more complicated example. If $t=\bp{w}{x\sous (y \sous z)}$, then one can show that $\Gamma_z(t)=- y\sous (x \sous w)$.

\smallskip

Let us now state some lemmas for later use. For short, we will write
$a \in s$ as as an abreviation for the sentence $s \in \prelie(I)$ and
$a \in I$ for some finite set $I$. We will also say that $r$ and $s$
have disjoint indices if $r \in \prelie(I)$ and $s \in\prelie(J)$ for
some disjoint finite sets $I$ and $J$.

\begin{lemma}
  Let $r$,$s$,$t$ in $\prelie$ with disjoint indices. Let $a \in s$
  and $b \in r\sqcup s$ distinct from $a$. Then
  \begin{equation}
    \Gamma_b(r\wedge(s\circ_a{t}))=\Gamma_b(r\wedge s) \circ_a{t}.
  \end{equation}
\end{lemma}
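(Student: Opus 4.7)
The plan is to exploit the fact that $\prelieh$ is a right $\prelie$-module and that the quotient map $\bp{}{}:\Lambda^2\prelie\to\prelieh$ is a morphism of such right modules. The defining identity of Proposition~\ref{def_gamma_prelie} applied to $r\wedge s$ at slot $b$ reads
\begin{equation*}
r\wedge s=\bp{b}{\Gamma_b(r\wedge s)},
\end{equation*}
and the strategy is to compose this identity on the right with $t$ at the index $a$ and read off both sides.

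On the left, since $a\in s$ and the indices of $r$ and $s$ are disjoint, the right $\prelie$-action on $\Lambda^2\prelie$ acts only on the second factor, giving $(r\wedge s)\circ_a t=r\wedge(s\circ_a t)$. On the right, the element $\bp{b}{\Gamma_b(r\wedge s)}$ is the image under $\bp{}{}$ of the wedge of the one-vertex tree labelled $b$ with $\Gamma_b(r\wedge s)$; since $a\neq b$, the index $a$ belongs to $\Gamma_b(r\wedge s)$, so the action again falls on the second factor and yields $\bp{b}{\Gamma_b(r\wedge s)}\circ_a t=\bp{b}{\Gamma_b(r\wedge s)\circ_a t}$. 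Equating the two expressions gives
\begin{equation*}
r\wedge(s\circ_a t)=\bp{b}{\Gamma_b(r\wedge s)\circ_a t}.
\end{equation*}
The uniqueness part of Proposition~\ref{def_gamma_prelie}, applied to $r\wedge(s\circ_a t)$ at slot $b$, then forces $\Gamma_b(r\wedge(s\circ_a t))=\Gamma_b(r\wedge s)\circ_a t$ as required.

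The only point to check carefully is the distributivity of $-\circ_a t$ across the pairing $\bp{b}{-}$ and across the wedge, which is why the hypothesis that $r$, $s$, $t$ have disjoint indices and that $a\neq b$ is needed: both ensure that $a$ lies unambiguously in exactly one tensor factor. This verification is immediate from the definition of the right $\prelie$-module structure on $\Lambda^2\prelie$ and is the main (and only mildly delicate) step; everything else is formal manipulation of the characterisation of $\Gamma_b$.
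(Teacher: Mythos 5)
Your proof is correct and follows essentially the same route as the paper's: both arguments combine the uniqueness in Proposition~\ref{def_gamma_prelie} with the fact that $\bp{}{}$ and $\prelieh$ are compatible with the right $\prelie$-module structure, so that $\circ_a t$ can be pushed inside the pairing. The only cosmetic slip is writing $r\wedge s=\bp{b}{\Gamma_b(r\wedge s)}$, which equates an element of $\Lambda^2\prelie$ with its image in $\prelieh$; the correct statement is $\bp{r}{s}=\bp{b}{\Gamma_b(r\wedge s)}$, and with that adjustment your argument matches the paper's.
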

\begin{proof}
  Indeed, one has
  \begin{multline}
    \bp{b}{\Gamma_b(r\wedge(s\circ_a{t}))}
=\bp{r}{s\circ_a{t}}
=\bp{r}{s}\circ_a{t}\\
=\bp{b}{\Gamma_b(r\wedge s)}\circ_a{t}
=\bp{b}{\Gamma_b(r\wedge s)\circ_a{t}}.
  \end{multline}
  Here one uses the definition of $\Gamma$ and the fact that
  $\prelieh$ is a right $\prelie$-module.
\end{proof}

\begin{lemma}
  \label{lemma_gamma_2}
  Let $r$,$s$,$t$ in $\prelie$ with disjoint indices. Let $a \in s$
  and $b \in t$. Then
  \begin{equation}
    \Gamma_b(r\wedge(s\circ_a{t}))=-\Gamma_b(\#\wedge t) \circ_\# \Gamma_a(r\wedge s).
  \end{equation}
\end{lemma}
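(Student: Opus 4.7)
The plan is to follow the template of the preceding lemma, starting from the defining equality
$$\bp{b}{\Gamma_b(r\wedge(s\circ_a t))}=\bp{r}{s\circ_a t}$$
and rewriting the right-hand side to match $\bp{b}{-\Gamma_b(\#\wedge t)\circ_\# \Gamma_a(r\wedge s)}$; the uniqueness clause of Proposition \ref{def_gamma_prelie} then concludes. The new difficulty, compared with the preceding lemma, is that $b$ lies in $t$ rather than in $r\sqcup s$, so it cannot simply be passed through the composition $\circ_a t$; instead, one must bring $b$ into the first slot of the bracket via the anticyclic structure.

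I would first eliminate $a$ from the second slot. Since $a\in s$, the right $\prelie$-module structure on $\prelieh$ inherited from $\Lambda^2\prelie$ yields
$$\bp{r}{s\circ_a t}=\bp{r}{s}\circ_a t=\bp{a}{\Gamma_a(r\wedge s)}\circ_a t.$$
In the last expression, $a$ now occupies the \emph{first} slot of the bracket, as the unit element of $\prelie(\{a\})$; composition at such an index amounts to substituting the incoming pre-Lie element into that slot, so the expression equals $\bp{t}{\Gamma_a(r\wedge s)}$.

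I would then reintroduce $b$. Set $u:=\Gamma_a(r\wedge s)$. Applying Proposition \ref{def_gamma_prelie} to $\#\wedge t$, together with the antisymmetry of $\wedge$, one gets $\bp{t}{\#}=-\bp{b}{\Gamma_b(\#\wedge t)}$. Composing both sides on the right at position $\#$ with $u$, the left-hand side becomes $\bp{t}{u}$ (by the same first-slot substitution as above), while the right-hand side becomes $-\bp{b}{\Gamma_b(\#\wedge t)\circ_\# u}$ by the argument of the preceding lemma, since $\#\neq b$ sits inside $\Gamma_b(\#\wedge t)$. Concatenating the identities gives the formula.

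The main obstacle is justifying rigorously the identity $\bp{a}{v}\circ_a t=\bp{t}{v}$ (and its analogue for $\#$): this is a bookkeeping point about the right $\prelie$-module structure on $\prelieh$ at an index sitting in the first slot of the antisymmetric bracket, and has to be unpacked through the exact sequence $K_P\to\Lambda^2\prelie\to\prelieh$. Once this is granted, the rest of the proof is a purely formal chain of rewritings parallel to the proof of the preceding lemma.
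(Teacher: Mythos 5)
Your argument is correct and is essentially the paper's own proof, merely reorganized: the paper evaluates $\bp{\#}{t}\circ_\#\Gamma_a(r\wedge s)$ in two ways, while you run the same chain of identities linearly from $\bp{r}{s\circ_a t}$ to $-\bp{b}{\Gamma_b(\#\wedge t)\circ_\#\Gamma_a(r\wedge s)}$; the "first-slot substitution" $\bp{a}{v}\circ_a t=\bp{t}{v}$ that you flag as the main obstacle is exactly the step the paper also invokes (as "properties of $\Gamma$", i.e.\ the right $\prelie$-module structure inherited from $\Lambda^2\prelie$ together with $a\circ_a t=t$), so there is no gap.
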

\begin{proof}
  Let us compute $\bp{\#}{t}\circ_{\#} \Gamma_a(r\wedge s)$ in two ways.
  On the one hand, this is equal by definition of $\Gamma$ to
  \begin{equation*}
    \bp{b}{\Gamma_b(\# \wedge t)}\circ_{\#} \Gamma_a(r\wedge s),
  \end{equation*}
  which can be rewritten as
  \begin{equation*}
    \bp{b}{\Gamma_b(\# \wedge t) \circ_{\#} \Gamma_a(r\wedge s)}.
  \end{equation*}
  On the other hand, this is equal by properties of $\Gamma$ to
  \begin{equation*}
    \bp{\Gamma_a(r\wedge s)}{t}=\bp{\Gamma_a(r\wedge s)}{a}\circ_a{t},
  \end{equation*}
  which by definition of $\Gamma$ is
  \begin{equation*}
    -\bp{r}{s}\circ_a{t}=-\bp{r}{s\circ_a{t}}=-\bp{b}{\Gamma_b(r\wedge(s\circ_a{t}))}.
  \end{equation*}
  This proves the expected equality.
\end{proof}

\section{$\prelie$ is free as a $\lie$-module}

Recall that every pre-Lie algebra is also a Lie algebra for the bracket defined by 
\begin{equation}
  [x,y]=x \sous y -y \sous x.
\end{equation}

This defines a morphism $\varphi$ of operads from $\lie$ to
$\prelie$. The composition with the projection from $\lie$ to the
associative operad $\ass$ is the usual inclusion of $\lie$ in $\ass$,
hence $\varphi$ is also injective.

From this morphism, one can deduce by restriction of composition in
$\prelie$ a structure of left $\lie$-module on $\prelie$:
\begin{equation}
  \lie \circ \prelie \stackrel{\gamma}{\longrightarrow} \prelie.
\end{equation}
It is clear from this definition that $\gamma$ is a morphism of right
$\prelie$-modules.

Let $\lie_{\geq 2}$ be the restriction of $\lie$ to degrees at least
$2$. One can restrict $\gamma$ to $\lie_{\geq 2} \circ \prelie$ and
define a $\sym$-module $\indec$ by the exact sequence of right
$\prelie$-modules
\begin{equation}
  \label{defi_gen}
  \lie_{\geq 2} \circ \prelie \stackrel{\gamma}{\longrightarrow} \prelie \stackrel{\pi}{\longrightarrow} \indec \to 0.
\end{equation}

The $\sym$-module $\indec$ was denoted by $X$ in the introduction.

It has been shown in \cite{free_chapoton} (see also \cite{foissy,bergeron_livernet}) that $\prelie$ is a free left
$\lie$-module: there exists an isomorphism of $\sym$-modules
\begin{equation}
  \prelie \simeq \lie \circ \indec.
\end{equation}
This isomorphism is not canonical, but depends on the choice of a
section of the projection $\pi$.

This statement of freeness can be reformulated as follows. The left
$\lie$-module structure of $\prelie$ can be considered as a structure
of $\lie$-algebra in the category of $\sym$-modules (with respect to
the tensor product $\otimes$). The usual theory of $\lie$ algebras
over a field has a natural extension to this setting, including
universal enveloping algebras and the Chevalley-Eilenberg complex (see
for instance \cite{stover}). Freeness as a left-$\lie$-module then
translates into freeness as a $\lie$ algebra, which implies that the
Chevalley-Eilenberg complex has homology only in degree $0$.

As the image of the leftmost arrow $\gamma$ of \eqref{defi_gen} is spanned by
the linear combinations of brackets of rooted trees, there is a long exact sequence of right $\prelie$-modules
\begin{equation}
  \cdots \to \Lambda^3 \prelie \stackrel{\delta}{\longrightarrow} \Lambda^2 \prelie \stackrel{\delta}{\longrightarrow} \prelie \stackrel{\pi}{\longrightarrow} \indec \to 0,
\end{equation}
where $\delta$ are the Chevalley-Eilenberg differentials. The rightmost $\delta$
sends $s \wedge t$ to $s \sous t - t \sous s$.

This gives a short exact sequence of right $\prelie$-modules
\begin{equation}
  \label{ses_big}
  0 \to \rela \stackrel{\delta}{\longrightarrow} \prelie \stackrel{\pi}{\longrightarrow} \indec \to 0,
\end{equation}
where $\rela$ is the quotient of $\Lambda^2 \prelie$ by the image of $\delta$.

\begin{proposition}
  \label{presentation6}
  The right $\prelie$-module $\rela$ is the quotient of $\Lambda^2
  \prelie$ by the sub-right $\prelie$-module generated by the
  following $6$-terms relations:
\begin{equation}
  \label{six_termes}
  r \wedge (s \sous t)+ s \wedge (t \sous r)+ t \wedge (r \sous s)
  -s \wedge (r \sous t)- t \wedge (s \sous r)- r \wedge (t \sous s)=0.
\end{equation}
\end{proposition}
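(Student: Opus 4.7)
My plan is to unravel the definition of $\rela$ coming from \eqref{ses_big}: by exactness of the long exact sequence of right $\prelie$-modules preceding \eqref{ses_big}, $\rela$ is the cokernel of the Chevalley--Eilenberg differential $\delta\colon \Lambda^3\prelie \to \Lambda^2\prelie$. Therefore it suffices to prove that this image is precisely the sub right $\prelie$-module generated by the 6-terms relations \eqref{six_termes}.

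The core computation is to expand $\delta$ on a generic pure wedge. Using the standard Chevalley--Eilenberg formula one has
$$\delta(r \wedge s \wedge t) = [r,s] \wedge t - [r,t] \wedge s + [s,t] \wedge r.$$
Substituting the pre-Lie bracket $[x,y] = x \sous y - y \sous x$ gives six terms of the shape $(a \sous b) \wedge c$. Using the antisymmetry of the wedge to move every $\sous$-product into the second slot of its wedge, a direct sign check shows that $\delta(r \wedge s \wedge t)$ equals exactly $-1$ times the left-hand side of \eqref{six_termes}.

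With this identity in hand, both inclusions follow. On the one hand, every 6-terms relation equals $-\delta(r \wedge s \wedge t)$, hence lies in $\operatorname{im}(\delta)$. On the other hand, $\Lambda^3\prelie$ is spanned, as an $\sym$-module, by triple wedges $r \wedge s \wedge t$ of elements with disjoint indices, and $\delta$ of any such wedge is up to sign a 6-terms relation; since $\delta$ is a morphism of right $\prelie$-modules and $\operatorname{im}(\delta)$ is closed under the right $\prelie$-action, it is contained in the sub right $\prelie$-module generated by the 6-terms relations.

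The only delicate point is the sign bookkeeping in the six-term expansion; everything else follows from the standard form of the Chevalley--Eilenberg differential and the fact that $\delta$ is a right $\prelie$-module morphism.
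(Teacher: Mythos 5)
Your proof is correct and follows essentially the same route as the paper's: both invoke the vanishing of higher Chevalley--Eilenberg homology (coming from the freeness of $\prelie$ as a $\lie$-algebra) to identify the relevant kernel with the span of $\delta(r\wedge s\wedge t)=[r,s]\wedge t+[s,t]\wedge r-[r,t]\wedge s$, and then expand the brackets via $[x,y]=x\sous y-y\sous x$ to obtain the six-term relations. Your explicit sign check (that $\delta(r\wedge s\wedge t)$ is $-1$ times the left-hand side of \eqref{six_termes}) and the two-inclusion bookkeeping are simply a more detailed version of the paper's one-line conclusion.
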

\begin{proof}
  We are dealing here with the first few terms of the Chevalley-Eilenberg
  complex for the Lie algebra $\prelie$. As we know that this is a
  free $\lie$ algebra, there is no homology but in degree $0$. This implies that
  the kernel is spanned by the image of $r\wedge s \wedge t$ by $\delta$:
  \begin{equation}
    [r,s] \wedge t + [s,t]\wedge r -[r,t]\wedge s.
  \end{equation}
  Using the link between the bracket and $\sous$, this gives the
  expected relations.
\end{proof}

\section{Reduction to root-valence $1$}

Let us define the \textbf{root-valence} of a rooted tree $T$ to be the
valence of the root of $T$, \textit{i.e.} the number of edges adjacent
to the root.

Recall the following standard notation: for rooted trees
$T_1,\dots,T_k$, let 
$$B^+_{a}(T_1,T_2,\dots,T_k)$$
be the rooted tree
obtained by grafting $T_1,\dots,T_k$ on a new root with index $a$.

If $S$ is a rooted tree in $\prelie$, we will denote $[S]$ the class
of $S$ modulo the image of $\delta$, \textit{i.e.} modulo Lie brackets
of rooted trees.

\begin{proposition}
  \label{right_prelie_module}
  For every rooted trees $S$ and $T$ and $* \in S$, one has
  \begin{equation}
    [S \circ_* T]=[S] \circ_* T.
  \end{equation}
\end{proposition}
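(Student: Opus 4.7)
The statement asserts that the projection $\pi \colon \prelie \to \indec$, sending $S$ to $[S]$, respects partial composition on the right. My plan is to deduce this directly from the construction of $\indec$: by the exact sequence \eqref{defi_gen}, $\indec$ is the cokernel of $\gamma$ in the category of right $\prelie$-modules, so $\pi$ is by construction a morphism of right $\prelie$-modules. Applying this to $S \circ_* T \in \prelie$ yields $\pi(S \circ_* T) = \pi(S) \circ_* T$, which is exactly the claimed equality $[S \circ_* T] = [S] \circ_* T$.

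The only content behind this formal derivation is that the image of $\delta$ is stable under right partial composition, which itself reduces to the previously noted fact that $\gamma$ is a morphism of right $\prelie$-modules. If one prefers a hands-on verification, it suffices to observe that $\delta(s \wedge t) = s \sous t - t \sous s$, and that $(s \sous t - t \sous s) \circ_a u$ lies again in the image of $\delta$ for any $a$, as one sees by splitting into the two cases $a \in s$ and $a \in t$ and distributing the partial composition over the pre-Lie product. There is no genuine obstacle here; the proposition is a preparatory bookkeeping statement guaranteeing that the classes $[S]$ retain the right $\prelie$-module structure, which is precisely the flexibility needed in the forthcoming reduction to root-valence $1$.
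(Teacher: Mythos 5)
Your proof is correct and is essentially identical to the paper's, whose entire argument is the one-line observation that the sequence $0 \to \rela \to \prelie \to \indec \to 0$ is an exact sequence of right $\prelie$-modules, so that $\pi$ commutes with partial composition. Your supplementary hands-on check that the image of $\delta$ is stable under $\circ_a$ is harmless but not needed.
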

\begin{proof}
  This is because \eqref{ses_big} is an exact sequence of right
  $\prelie$-modules.
\end{proof}

\begin{proposition}
  \label{reduction_to_1}
  Every rooted tree $T$ in $\prelie$ (with at least two vertices) is
  equivalent modulo $\delta(\Lambda^2 \prelie)$ to a linear
  combination of rooted trees of root-valence $1$.
\end{proposition}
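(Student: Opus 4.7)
Plan: I would argue by induction on the pair $(|T|,k)$ ordered lexicographically, with $|T|$ the number of vertices of $T$ and $k$ its root-valence. The base cases $|T|=2$ or $k=1$ are trivial, so I write $T=B^+_a(T_1,\ldots,T_k)$ with $k\ge 2$ and $|T|\ge 3$, and split on whether some $T_i$ is a single vertex.

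Suppose first that some $T_i$ is a single vertex $\ell$. Taking $r=B^+_a(T_1,\ldots,\widehat{T_i},\ldots,T_k)$ and $s=T_i$, the identity $\delta(r\wedge s)=r\sous s-s\sous r$ gives $[r\sous s]=[s\sous r]$ in $\indec$. A direct computation shows that $s\sous r$ is the single root-valence-$1$ tree obtained by taking $\ell$ as root with $a$ as its unique child (carrying the subtree $r$), whereas $r\sous s=T+X$ with $X$ a sum of trees rooted at $a$ of root-valence $k-1$. Hence $[T]=[s\sous r]-[X]$, and the inner induction on $k$ reduces $[X]$ to root-valence-$1$ trees.

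When no $T_i$ is a single vertex, I would first try to reduce to the previous case by substitution: pick $T_1$ and write $T=T^*\circ_b T_1$, where $T^*=B^+_a(b,T_2,\ldots,T_k)$ replaces $T_1$ by a fresh leaf $b$. Since $|T^*|<|T|$ and $T^*$ has a leaf child at its root, the outer induction hypothesis yields $[T^*]=\sum_j c_j[V_j]$ with each $V_j$ of root-valence $1$, and then $[T]=\sum_j c_j[V_j\circ_b T_1]$ by the right $\prelie$-module structure of $\indec$. The summands for which $b$ is not the root of $V_j$ are automatically of root-valence $1$; the remaining terms form the main obstacle, since there $V_j\circ_b T_1=T_1\sous W_j$ can have higher root-valence, and the relation $[T_1\sous W_j]\equiv[W_j\sous T_1]$ only feeds back into $[T]$ and quantities already controlled by the inner induction, producing a tautology. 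To bypass this I would iterate the easy-case reduction by re-rooting $T$ along a path $a=v_0,v_1,\ldots,v_d=\ell$ from $a$ to a leaf $\ell$ of $T$: at each step the current tree is traded for its re-rooting at the next vertex, modulo a sum of trees of strictly smaller root-valence at the current root (controlled by the inner induction). Termination follows from the finite length of the path, and the endpoint tree is of root-valence $1$ because a leaf has a unique neighbour; the delicate bookkeeping needed to ensure that every intermediate side term genuinely falls within the scope of the induction is where I expect most of the work to lie.
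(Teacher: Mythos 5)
Your Case 1 (the root has a leaf child $\ell$) is correct and is in fact a clean strengthening of the paper's Lemmas \ref{step1} and \ref{step3}: the relation $[r\sous \ell]=[\ell\sous r]$ really does express $[T]$ as a root-valence-$1$ class plus classes of trees of the same size and root-valence $k-1$, which your lexicographic induction covers. The gap is in Case 2, and you have essentially diagnosed it yourself without repairing it. In the re-rooting scheme, writing $U_i$ for the re-rooting of $T$ at $v_i$, $S_i$ for the subtree of $U_i$ rooted at $v_{i+1}$ and $R_i=U_i\setminus S_i$, the identity $[R_i\sous S_i]=[S_i\sous R_i]$ produces two kinds of side terms: those of $R_i\sous S_i$ other than $U_i$, which are rooted at $v_i$ with root-valence one less (these are fine), but also those of $S_i\sous R_i$ other than $U_{i+1}$, in which $v_i$ is grafted onto a non-root vertex of $S_i$. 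The latter are rooted at $v_{i+1}$ with root-valence $\deg_T(v_{i+1})-1$, a quantity governed by the degree of an \emph{interior} vertex of $T$ and not by $k$: if the root of $T$ has degree $2$ and $v_{i+1}$ has degree $10$, these terms have the same number of vertices as $T$ and much larger root-valence, so the induction hypothesis on $(|T|,k)$ does not apply to them, and they need not have a leaf child of their root either. No decreasing invariant is offered for them, so the recursion is not shown to terminate; the ``delicate bookkeeping'' you defer is not a routine verification but the entire content of the proposition in this case.

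The paper avoids this by never detaching a child subtree of the root and never re-rooting along a path. It detaches a \emph{top corolla}: a vertex $b$ all of whose children are leaves (take the parent of a vertex of maximal height), so that $T=T'\circ_* C_b$ with $|T'|$ strictly smaller, and the induction on size alone applies to $T'$. When $C_b$ is re-inserted at the root of a root-valence-$1$ tree, every resulting tree has at most one non-leaf child of its root, and that special family is closed under the leaf-detaching move of your Case 1: the extra terms either remain in the family with smaller root-valence, or acquire a leaf whose parent has valence $1$ and can be contracted away (Lemma \ref{step2}), again strictly reducing the size. That is what makes the recursion well-founded. To salvage your argument you would need to replace the re-rooting in Case 2 by such a size-reducing contraction together with a shape analysis of the trees it creates.
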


The proof is by induction on the size of rooted trees and uses several
lemmas, of increasing generality.

\begin{lemma}
  \label{step1}
  The statement is true if $T$ has root-valence $2$ and at least one
  of the two subtrees of the root is a singleton $a$.
\end{lemma}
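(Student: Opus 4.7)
The plan is to exhibit $T$ explicitly as a bracket plus root-valence $1$ trees, using the singleton child to generate the decomposition. Write $T = B^+_r(a, S)$, where $r$ is the root, $a$ is the singleton child, and $S$ is the other subtree (with root $s$, possibly itself a singleton). The key idea is to consider the bracket $[B^+_r(S),\, a]$, i.e.\ $\delta(B^+_r(S) \wedge a)$, which vanishes modulo $\delta(\Lambda^2 \prelie)$.

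Next I would expand the two halves of this bracket using the rule for $\sous$ on rooted trees. On one hand, $B^+_r(S) \sous a$ is the sum over all vertices of $B^+_r(S)$ of the trees obtained by attaching the singleton $a$ as a new child at that vertex; the term where $a$ is attached at the root $r$ is precisely $T$, while all other terms attach $a$ to a vertex strictly inside $S$, leaving $r$ with just its single child $s$ and hence root-valence $1$. On the other hand, $a \sous B^+_r(S)$ is the single tree with root $a$ and $B^+_r(S)$ hanging off it, which has root $a$ of valence $1$. Setting $B^+_r(S) \sous a - a \sous B^+_r(S) \equiv 0$ modulo $\delta(\Lambda^2 \prelie)$ and solving for $T$ yields
\begin{equation*}
  T \equiv a \sous B^+_r(S) \;-\; \sum_{v \in S}\bigl(B^+_r(S) \text{ with $a$ grafted at } v\bigr) \pmod{\delta(\Lambda^2 \prelie)},
\end{equation*}
and every tree on the right has root-valence $1$, as required.

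I do not expect any real obstacle here: the computation is a direct application of the rule for the pre-Lie product on trees together with the fact that $[\,\cdot\,,\,\cdot\,]$ lies in the image of $\delta$. The only small point to check carefully is the edge case where $S$ itself is a single vertex $b$, so that $B^+_r(S)$ is the edge $r\!-\!b$; one verifies directly that the two resulting trees on the right-hand side still have root-valence $1$, so the argument covers the base of the eventual induction in Proposition~\ref{reduction_to_1}.
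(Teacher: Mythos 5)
Your argument is correct and is essentially identical to the paper's proof: the paper also removes the singleton $a$ to get the root-valence-$1$ tree $T'=B^+_r(S)$, uses the bracket $[T',a]=T'\sous a - a\sous T'\equiv 0$, and observes that $T'\sous a = T + (\text{trees of root-valence }1)$ while $a\sous T'$ has root-valence $1$. Your extra remark about the edge case where $S$ is a single vertex is fine but not needed, since the argument is uniform in $S$.
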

\begin{proof}
  Let $T'$ be the tree $T$ with vertex $a$ removed. Then $T'$ has
  root-valence $1$. Then one has $[T',a]=T'\sous a - a \sous T'$. The
  tree $a \sous T'$ has root-valence $1$. One has $T'\sous a=T+r$,
  where $r$ is a sum of trees of root-valence $1$.
\end{proof}

\begin{lemma}
  \label{step2}
  The statement is true if $T$ has a leaf $a$ such that the vertex $b$
  under $a$ has valence $1$.
\end{lemma}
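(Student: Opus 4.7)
The plan is to argue by strong induction on the number $n$ of vertices of $T$, with inductive hypothesis that Proposition~\ref{reduction_to_1} holds for every rooted tree with fewer than $n$ vertices. For $n=2$ the tree $T$ already has root-valence $1$, and similarly, if the parent $b$ of $a$ happens to be the root of $T$, then by the hypothesis of the lemma the root has exactly one child, namely $a$, so again $T$ has root-valence $1$ and there is nothing to prove.

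Assume therefore that $b$ is not the root and let $p$ be its parent. Let $T_0$ be the rooted tree obtained from $T$ by deleting $b$ and reattaching $a$ directly to $p$; then $T_0$ has $n-1$ vertices and $a$ is a leaf of $T_0$ whose parent is $p$. Since $a$ has no incoming edges in $T_0$, the partial composition $T_0 \circ_a (b \sous a)$ consists of a single tree (no sum), and one verifies directly from the combinatorial description of composition that this single tree is $T$. Applying the inductive hypothesis to $T_0$ yields an expression $[T_0] = \sum_i \alpha_i [U_i]$ in $\indec$ with each $U_i$ of root-valence $1$, and Proposition~\ref{right_prelie_module} then gives
\begin{equation*}
[T] \;=\; [T_0]\circ_a(b\sous a) \;=\; \sum_i \alpha_i\,[U_i \circ_a (b \sous a)].
\end{equation*}
It therefore suffices to prove, for every rooted tree $U$ of root-valence $1$ on the index set of $T_0$, that $U \circ_a (b \sous a)$ is equivalent modulo $\delta$ to root-valence-$1$ trees.

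When $a$ is not the root of $U$, the composition at $a$ does not disturb the root of $U$ or its unique child, so every tree produced in the sum $U \circ_a (b \sous a)$ again has root-valence $1$. The delicate case is when $a$ is itself the root of $U$: then $a$ has a unique child $c$ in $U$, and $U \circ_a (b \sous a)$ splits into exactly two trees, according as the edge descending from $c$ is reattached to $a$ or to $b$ inside $(b \sous a)$. Reattachment to $a$ produces a tree of root-valence $1$ directly; reattachment to $b$ produces a tree whose root is $b$ with exactly two subtrees, one of which is the singleton $\{a\}$, and this is precisely the hypothesis of Lemma~\ref{step1}, which reduces this term modulo $\delta$ to root-valence-$1$ trees.

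The main technical point I anticipate is the choice of the intermediate tree $T_0$: it must be small enough for the inductive hypothesis to apply, yet shaped so that, whatever root-valence-$1$ tree $U_i$ the induction returns, the one potentially stray term produced by the composition with $(b \sous a)$ always lands in the very specific configuration (singleton $\{a\}$ as a subtree of the root) that Lemma~\ref{step1} is designed to handle; everything else is a mechanical unpacking of the rule for $\circ_a$ on rooted trees.
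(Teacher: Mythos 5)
Your proof is correct and follows essentially the same route as the paper: you write $T$ as a partial composition of a smaller tree with the two-vertex tree on $\{b,a\}$ (the paper uses a fresh placeholder $*$ where you reuse the label $a$, a purely cosmetic difference), apply the size induction together with Proposition~\ref{right_prelie_module}, and dispose of the one problematic term --- when the placeholder is the root of a $U_i$ --- via Lemma~\ref{step1}. Your explicit treatment of the base cases and of the identity $T=T_0\circ_a(b\sous a)$ is a slightly more careful version of what the paper leaves implicit.
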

\begin{proof}
  In this case, one can write $T=T' \circ_* (ba)$, where $T'$ is a
  smaller tree with a leaf $*$. By induction on the size, the tree
  $T'$ is equivalent to a linear combination $\sum_{\alpha}
  \mathbf{c}_{\alpha} T_\alpha$ of trees of root-valence $1$. By
  Prop. \ref{right_prelie_module}, the tree $T$ is equivalent to the
  linear combination $\sum_{\alpha} \mathbf{c}_{\alpha} T_\alpha
  \circ_* (ba)$.

  If $*$ is not the root of $T_\alpha$, then $T_\alpha \circ_* (ba)$
  is a sum of trees of root-valence $1$. If $*$ is the root of
  $T_\alpha$, then $T_\alpha \circ_* (ba)$ is the sum of a rooted tree
  of root-valence $1$ plus a rooted tree of root-valence $2$, which
  satisfies the hypothesis of Lemma \ref{step1}.
\end{proof}

\begin{lemma}
  \label{step3}
  The statement is true if at most one of the subtrees of the root of
  $T$ is not a leaf.
\end{lemma}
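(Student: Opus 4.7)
The plan is to induct on the root-valence $k$ of $T$, while the overall induction on size from Proposition \ref{reduction_to_1} stays available in the background (it enters implicitly through the uses of Lemma \ref{step2}). The cases $k=1$ (nothing to do) and $k=2$ are immediate: when $k=2$ the hypothesis forces at least one subtree of the root to be a leaf, so Lemma \ref{step1} applies.

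For the inductive step with $k \geq 3$, write $T = B^+_r(T_1,a_2,\dots,a_k)$, where $a_2,\dots,a_k$ are the leaf subtrees of the root and $T_1$ is either a further leaf $a_1$ (the corolla subcase) or the unique non-leaf subtree. Pick one of the leaves, say $a := a_k$, and let $T'$ denote $T$ with the vertex $a$ removed; $T'$ still satisfies the hypothesis and has root-valence $k-1$. Since $[T',a] = T'\sous a - a\sous T'$ lies in $\delta(\Lambda^2 \prelie)$, we obtain
\begin{equation*}
T'\sous a \equiv a\sous T' \pmod{\delta(\Lambda^2 \prelie)}.
\end{equation*}
The right-hand side is $B^+_a(T')$, of root-valence $1$. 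Expanding the left-hand side as a sum over vertices $v$ of $T'$ of the tree $R_v$ obtained by grafting $a$ as a new child of $v$, the term $v = r$ gives $T$ itself, and it remains to reduce each $R_v$ with $v \neq r$ modulo $\delta(\Lambda^2 \prelie)$ to a combination of root-valence $1$ trees.

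The residues split into two types. If $v$ is a leaf of $T'$ (that is, $v = a_j$ for some $j \geq 2$, or $v = a_1$ in the corolla subcase), then in $R_v$ the vertex $v$ has $a$ as its sole child, so Lemma \ref{step2} applies directly to $R_v$. If instead $v$ is an interior vertex of the non-leaf subtree $T_1$, then $R_v$ has root-valence $k-1$, its leaf subtrees are $a_2,\dots,a_{k-1}$, and its only non-leaf subtree is the modified $T_1$; the step3 inductive hypothesis at root-valence $k-1$ then handles it. Summing contributions yields $T$ as a combination of root-valence $1$ trees modulo $\delta(\Lambda^2 \prelie)$.

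The main delicacy I expect is bookkeeping around the induction. The residues of type (ii) have the same size as $T$, so induction on size alone is insufficient and the root-valence $k$ must be the strictly decreasing quantity. Conversely, Lemma \ref{step2} is invoked on type (i) residues also of size $|T|$, but this is legitimate because the proof of \ref{step2} only appeals to Proposition \ref{reduction_to_1} for trees of size strictly less than its input, which the outer induction on size supplies. A secondary point to verify cleanly is that the corolla subcase (no non-leaf subtree $T_1$) is subsumed by the argument, since then only type (i) residues occur.
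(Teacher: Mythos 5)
Your proof is correct and follows essentially the same route as the paper's: induction on the root-valence $k$, removing a leaf child $a_k$ of the root, using $[T',a_k]$ to trade $T$ for $a_k\sous T'$ (root-valence $1$) plus residues, which are dispatched either to Lemma \ref{step2} (when $a_k$ lands on a leaf) or to the root-valence induction (when $a_k$ lands inside $T_1$). The extra bookkeeping you supply about which induction handles which residue is consistent with the paper's (terser) argument.
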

\begin{proof}
  By induction on the root-valence $k$. If $k=1$, there is nothing to
  prove. If $k=2$, we can use Lemma \ref{step1} above.

  Assume now that $k$ is at least $2$ and write
  $T=B^+_{a}(T_1,a_2,\dots,a_k)$.

  Let $T'=B^+_{a}(T_1,a_2,\dots,a_{k-1})$ be
  the tree obtained from $T$ by removing the leaf $a_k$.

  Then $[T',a_k]=T' \sous a_k - a_k \sous T'$. The tree $a_k \sous T'$
  has root-valence $1$. One has $T' \sous a_k=T+r$, where $r$ is a sum
  of two kinds of trees: either $a_k$ is grafted on one of the leaves
  $a_i$, in which case one can apply Lemma \ref{step2}, or $a_k$ is
  grafted on $T_1$, in which case one can use the induction on $k$.
\end{proof}

\begin{lemma}
  \label{step4}
  The statement is true for any rooted tree $T$ with at least $2$ vertices.
\end{lemma}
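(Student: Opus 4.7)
I plan to prove Lemma \ref{step4} by strong induction on $n = |T|$, leaning on the earlier lemmas \ref{step1}--\ref{step3}. The base case $n=2$ is immediate, since any tree on $2$ vertices has root-valence $1$. For $n \geq 3$, by Lemma \ref{step3} I may assume that $T = B^+_r(T_1,\ldots,T_k)$ has at least two non-leaf subtrees at the root.

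I would pick one such non-leaf subtree, say $T_1$, and introduce a fresh leaf symbol $v$, setting $S := B^+_r(v,T_2,\ldots,T_k)$. Then $|S| < |T|$ and $T = S \circ_v T_1$ in $\prelie$. By the inductive hypothesis applied to $S$, one has
\[
  S \;\equiv\; \sum_\alpha c_\alpha\, R_\alpha \pmod{\delta(\Lambda^2\prelie)}
\]
with each $R_\alpha$ a rooted tree of root-valence $1$. Proposition \ref{right_prelie_module} then gives
\[
  T \;\equiv\; \sum_\alpha c_\alpha\,(R_\alpha \circ_v T_1) \pmod{\delta(\Lambda^2\prelie)}.
\]
For each $\alpha$ in which $v$ is not the root of $R_\alpha$, the composite $R_\alpha \circ_v T_1$ is itself of root-valence $1$ and nothing further is needed. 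The delicate case is when $v$ is the root of $R_\alpha$: writing $\mathrm{Rest}_\alpha := R_\alpha \setminus \{v\}$, one checks directly that $R_\alpha \circ_v T_1 = T_1 \sous \mathrm{Rest}_\alpha$; since $T_1 \sous \mathrm{Rest}_\alpha - \mathrm{Rest}_\alpha \sous T_1 = \delta(T_1 \wedge \mathrm{Rest}_\alpha)$, this term is congruent modulo $\delta(\Lambda^2\prelie)$ to $\mathrm{Rest}_\alpha \sous T_1$, a sum of trees whose root lies outside $V(T_1)$.

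The main obstacle is to reduce these residual trees to root-valence $1$. They still have $|T|$ vertices, so size induction alone does not close the argument. I intend to finish by a secondary induction---for instance on the number of non-leaf subtrees at the root, or on a combined measure involving root-valence and the size of the largest non-leaf subtree at the root---showing that each summand of $\mathrm{Rest}_\alpha \sous T_1$ has strictly smaller complexity in that secondary measure, so that either Lemma \ref{step3} applies directly, or the whole procedure can be iterated while invoking Lemmas \ref{step1} and \ref{step2} on the intermediate trees.
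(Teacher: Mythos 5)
Your proposal has a genuine gap, and you have in effect flagged it yourself: the residual terms $\mathrm{Rest}_\alpha \sous T_1$ are trees with the full vertex set of $T$ and of uncontrolled shape, and the ``secondary induction'' you invoke to handle them is never set up. The difficulty is structural. The trees $R_\alpha$ (hence the $\mathrm{Rest}_\alpha$) are whatever the size induction happens to produce for $S$; you have no control over the root-valence of $\mathrm{Rest}_\alpha$ nor over how many non-leaf subtrees sit at its root. After grafting $T_1$ back on, the summands of $\mathrm{Rest}_\alpha \sous T_1$ can again have arbitrarily many non-leaf subtrees at the root, so neither of the measures you suggest (number of non-leaf root subtrees, or root-valence combined with the size of the largest non-leaf root subtree) visibly decreases, and the procedure is not known to terminate. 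Your individual computations are correct --- $R_\alpha\circ_v T_1=T_1\sous\mathrm{Rest}_\alpha$ when $v$ is the root of $R_\alpha$, and $T_1\sous\mathrm{Rest}_\alpha\equiv\mathrm{Rest}_\alpha\sous T_1$ modulo $\delta(\Lambda^2\prelie)$ --- but they do not close the argument.

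The missing idea is the choice of where to cut $T$. The paper cuts not at a non-leaf subtree of the root but at a \emph{top corolla}: pick a vertex $a$ of maximal height and let $b$ be its parent; maximality forces every vertex above $b$ to be a leaf, so the set of vertices at or above $b$ is a corolla $C_b$ and $T=T'\circ_* C_b$ with $T'$ strictly smaller. After reducing $T'$ to a combination of root-valence-$1$ trees $T_\alpha$ and recomposing via Proposition \ref{right_prelie_module}, the only problematic terms are $T_\alpha\circ_* C_b$ with $*$ the root of $T_\alpha$; because $C_b$ is a corolla, every tree occurring in such a composition has at most one non-leaf subtree at its root, which is precisely the case settled by Lemma \ref{step3}. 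In other words, the corolla cut guarantees that the residual terms land in the already-proved special case, whereas your cut at $T_1$ produces residual terms as general as $T$ itself. Replacing your decomposition $T=S\circ_v T_1$ by $T=T'\circ_* C_b$ repairs the proof.
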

\begin{proof}
  Pick in $T$ a vertex $a$ of maximal height, where the height of a
  vertex is the number of edges in the unique path to the root. Let
  $b$ be the vertex under $a$. The set of all vertices over $b$ is
  then a corolla $C_b$.

  In this case, $T$ can be written $T' \circ_* C_b$. By induction
  on the size, $T'$ is equivalent to a linear combination $\sum_{\alpha}
  \mathbf{c}_{\alpha} T_\alpha$ of
  trees of root-valence $1$.

  By Prop. \ref{right_prelie_module}, $T$ is equivalent to the linear
  combination $\sum_\alpha \mathbf{c}_{\alpha} T_\alpha \circ_* C_b$.

  If $*$ is not the root of $T_\alpha$, then $T_\alpha \circ_* C_b$ is
  a sum of trees of root-valence $1$. If $*$ is the root of
  $T_\alpha$, then $T_\alpha \circ_* C_b$ is a sum of rooted trees
  which satisfies the hypothesis of Lemma \ref{step3}.
\end{proof}

This concludes the proof of Prop. \ref{right_prelie_module}.

\begin{remark}
  What we used in the proof of Lemma \ref{step4} is a top corolla,
  \textit{i.e.} a vertex $b$ with only leaves above it. Instead of
  choosing one such vertex of maximal height, it may be more
  convenient for purposes of practical computation of $\rho$ to choose
  one with the smallest number of attached leaves.
\end{remark}

Let $\prelie_{v=1}$ be the sub $\sym$-module of $\prelie$ spanned by
rooted trees with root-valence $1$.

\begin{remark}
  The $\sym$-module $\prelie_{v=1}$ is not a right $\prelie$-module.
\end{remark}

Let $\rela_{v=1}$ be the subspace of $\rela$ that is mapped by $\delta$ to
$\prelie_{v=1}$.

Let $\indec_{\geq 2}$ be the sub-$\sym$-module of $\indec$ obtained by
removing the degree $1$ component of $\indec$.

From Prop. \ref{reduction_to_1} and the short exact sequence
\eqref{ses_big}, one obtains a short exact sequence of $\sym$-modules
\begin{equation}
  \label{ses_small}
  0 \to \rela_{v=1} \stackrel{\delta}{\longrightarrow} \prelie_{v=1} \stackrel{\pi}{\longrightarrow} \indec_{\geq 2} \to 0.
\end{equation}

Let us now describe another short exact sequence, and then compare them.

\section{A simple short exact sequence}

Let $\comm$ be the underlying $\sym$-module of the commutative
operad. For every finite set $I$, $\comm(I)=\QQ$.

Let $\perm$ be the underlying $\sym$-module of the permutative
operad. For every finite set $I$, $\comm(I)=\QQ I$.

There is an inclusion $\iota$ from $\comm$ to $\perm$ that sends $1 \in \comm(I)$ to $\sum_{i\in I} i \in \perm(I)$.

Let $\refl$ be the quotient $\sym$-module, so that there is a short exact sequence
\begin{equation}
  \label{suite_exacte_simple}
   0 \to \comm \stackrel{\iota}{\longrightarrow} \perm \stackrel{p}{\longrightarrow} \refl \to 0.
\end{equation}

By (Hadamard) tensor product with $\prelieh$, one gets a short exact sequence
\begin{equation}
  \label{ses2}
   0 \to \prelieh \stackrel{\iota}{\longrightarrow} \perm\hada\prelieh \stackrel{p}{\longrightarrow} \refl\hada\prelieh \to 0.
\end{equation}

There exists a section of the projection map $p$ in the short exact
sequence \eqref{suite_exacte_simple}, that maps the class of $i-j$ to
$i-j$. This gives a similar section of the projection $p$ in the short
exact sequence \eqref{ses2}.

%One can also consider the subspace $\rera$ of $\perm$ generated by the
%differences $i-j$. The induced map from $\rera$ to $\refl$ is an
%isomorphism over $\QQ$. Over $\ZZ$, the image of $\rera(I)$ is a
%sub-lattice in $\refl(I)$, of index the cardinality of $I$.

% root lattice and weight lattice of type A

\section{Isomorphism of exact sequences}

\label{proofiso}

In this section, we will obtain the following isomorphism of short
exact sequences:
\begin{equation}
  \xymatrix{
    0 \ar[r] & \prelieh \ar[d]^{\rho}\ar[r]^{\iota} & \perm\hada\prelieh \ar[d]^{\psi}\ar[r]^{p} & \refl\hada\prelieh \ar[d]^{\mu}\ar[r] & 0 \\
    0 \ar[r] & \rela_{v=1} \ar[r]^{\delta} & \prelie_{v=1} \ar[r]^{\pi} & \indec_{\geq 2} \ar[r] & 0 
}
\end{equation}

\subsection{Middle column}

\label{milieu_iso}

Let us start with the isomorphism between middle terms. There is a
simple isomorphism $\psi$ from $\perm\hada \prelieh$ to
$\prelie_{v=1}$ defined for $t$ in $\prelieh$ and $a \in t$ by
\begin{equation}
  \psi(a \otimes t) = a \sous \Gamma_a(t),
\end{equation}
where $\Gamma_a(t)$ is the function introduced in Prop. \ref{def_gamma_prelie}.

The inverse morphism maps a rooted tree of root-valence $1$, written
as $a \sous T$ to the expression $a \otimes \bp{a}{T}$.

Note that the composite morphism $\psi \iota$ has therefore the
following description:
\begin{equation}
  \psi(\iota(t))= \sum_{j} j\sous \Gamma_j(t).
\end{equation}

\subsection{Left column (up)}

\begin{proposition}
  There exists a morphism $\bp{}{}$ from $\rela$ to $\prelieh$.
\end{proposition}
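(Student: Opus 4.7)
The plan is to factor the defining surjection $\bp{}{} : \Lambda^2 \prelie \to \prelieh$ of the anticyclic module through the quotient map $\Lambda^2 \prelie \to \rela$. Both maps are morphisms of right $\prelie$-modules, so their kernels are automatically sub-right-$\prelie$-modules of $\Lambda^2 \prelie$. By Proposition \ref{presentation6}, the kernel of the quotient onto $\rela$ is the sub-right-$\prelie$-module generated by the $6$-terms relations \eqref{six_termes}, so it suffices to check that each such generating expression is mapped to zero in $\prelieh$.

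Applying $\bp{}{}$ term by term to \eqref{six_termes} produces
\begin{equation*}
 \bigl(\bp{r}{s \sous t} + \bp{s}{t \sous r} + \bp{t}{r \sous s}\bigr) - \bigl(\bp{s}{r \sous t} + \bp{t}{s \sous r} + \bp{r}{t \sous s}\bigr).
\end{equation*}
Each of the two parenthesised cyclic sums is an instance of the left-hand side of relation \eqref{rule_prelie3}: the first with the substitution $(x,y,z) = (r,s,t)$, and the second with $(x,y,z) = (s,r,t)$ (after reordering the three terms). Both sums therefore vanish in $\prelieh$, so the $6$-terms relations lie in the kernel of $\bp{}{}$, and hence $\bp{}{}$ descends to a well-defined morphism $\rela \to \prelieh$.

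There is essentially no obstacle in this step: the content is the observation that the $6$-terms relation governing $\rela$ is a formal consequence of the anticyclic invariance relations \eqref{rule_prelie1}--\eqref{rule_prelie2}, packaged as a morphism between the two quotients of $\Lambda^2 \prelie$. The real work of the section is what comes after, namely showing that this morphism restricts to an isomorphism $\rho$ on the subquotient $\rela_{v=1} \xrightarrow{\sim} \prelieh$, and that together with $\psi$ and a suitable $\mu$ it produces the promised isomorphism of short exact sequences displayed at the start of Section \ref{proofiso}.
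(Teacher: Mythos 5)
Your proof is correct and follows essentially the same route as the paper: reduce via Proposition \ref{presentation6} to checking that the $6$-terms relations \eqref{six_termes} die in $\prelieh$, then split them into two cyclic sums each killed by \eqref{rule_prelie3}. You merely make explicit two points the paper leaves implicit (that $\bp{}{}$ being a right-$\prelie$-module map lets one check only the generators of the relation module, and the precise substitutions in \eqref{rule_prelie3}), which is fine.
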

\begin{proof}
  Let us start with the morphism $\bp{}{}$ from $\Lambda^2 \prelie$ to
  $\prelieh$ that defines the anticyclic structure. By
  Prop. \ref{presentation6}, it is enough to check that the $6$-terms
  relations \eqref{six_termes} are mapped to $0$. These relations are
  mapped in $\prelieh$ to
  \begin{equation*}
    \bp{r}{s \sous t}+ \bp{s}{t \sous r}+ \bp{t}{r \sous s}-
    \bp{s}{r \sous t}- \bp{t}{s \sous r}- \bp{r}{t \sous s}.
  \end{equation*}
  Using \eqref{rule_prelie3} twice, one obtains that this vanishes in
  $\prelieh$.
\end{proof}

By restriction of the morphism $\bp{}{}$ from $\rela$ to $\prelieh$, one has a
morphism from $\rela_{v=1}$ to $\prelieh$, still denoted $\bp{}{}$.

\subsection{Left column (down)}

\begin{theorem}
  \label{defi_rho}
  There exists a unique morphism $\rho$ from $\prelieh$ to
  $\rela_{v=1}$ such that $\delta \rho$ equals $ \psi \iota$, \textit{i.e.}
  \begin{equation}
    \label{carre}
    \delta(\rho(x))=\sum_{i} i \sous \Gamma_i(x).
  \end{equation}
  The morphism $\rho$ has the following property:
  \begin{equation}
    \label{property_z}
    \rho((r \wedge s)\circ_i t)=\rho(r\wedge s)\circ_i t-\rho(\# \wedge t) \circ_\# \Gamma_i(r\wedge s)+\Gamma_i(r\wedge s)\wedge t,
  \end{equation}
  for $r,s,t$ in $\prelie$ with disjoint indices and $i \in r \sqcup s$.
\end{theorem}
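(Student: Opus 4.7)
My plan is to reduce the theorem to a single key identity, then prove that identity and iterate. The short exact sequence (\ref{ses_small}) gives injectivity of $\delta: \rela_{v=1} \to \prelie_{v=1}$, so $\rho$ is automatically unique if it exists, and it exists precisely when $\psi \iota: \prelieh \to \prelie_{v=1}$ takes values in $\operatorname{im}(\delta)$. Once $\rho$ exists, property (\ref{property_z}) reduces, by the same injectivity, to an identity after applying $\delta$. The whole theorem therefore hinges on one computational statement about the interaction of $\psi \iota$ with partial composition.

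The central identity is, for $r, s, u$ in $\prelie$ with disjoint indices and $a \in r \sqcup s$:
\begin{equation*}
  \psi \iota\bigl((r \wedge s) \circ_a u\bigr) = \psi \iota(r \wedge s) \circ_a u - \psi \iota(\# \wedge u) \circ_\# \Gamma_a(r \wedge s) + \delta\bigl(\Gamma_a(r \wedge s) \wedge u\bigr).
\end{equation*}
I will prove this for $a \in s$ (the case $a \in r$ follows by the antisymmetry $\bp{r}{s}=-\bp{s}{r}$) by splitting the sum $\sum_j j \sous \Gamma_j$ defining the left-hand side according to whether the index $j$ lies in $(r \sqcup s) \setminus \{a\}$ or in $u$. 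On the first range, the unlabelled lemma preceding Lemma \ref{lemma_gamma_2} rewrites each summand as $(j \sous \Gamma_j(r \wedge s)) \circ_a u$; the partial sum assembles to $\psi \iota(r \wedge s) \circ_a u$ up to the missing $j = a$ contribution $a \sous \Gamma_a(r \wedge s)$, whose composition at $a$ with $u$ produces $u \sous \Gamma_a(r \wedge s)$. On the second range, Lemma \ref{lemma_gamma_2} rewrites each summand as $-(j \sous \Gamma_j(\# \wedge u)) \circ_\# \Gamma_a(r \wedge s)$; the partial sum assembles to $-\psi \iota(\# \wedge u) \circ_\# \Gamma_a(r \wedge s)$ up to the missing $j = \#$ contribution $\# \sous u$, whose composition at $\#$ produces $\Gamma_a(r \wedge s) \sous u$. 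The two leftover terms combine into $\delta\bigl(\Gamma_a(r \wedge s) \wedge u\bigr)$, completing the identity.

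Existence of $\rho$ then follows by induction on the total number of vertices. The base case $\bp{x}{y}$ of two singletons is immediate, since $\psi \iota(\bp{x}{y}) = x \sous y - y \sous x = \delta(x \wedge y)$. Whenever $\bp{r}{s}$ has a factor of size at least three, that factor admits a decomposition $s = s' \circ_a u$ with both $s'$ and $u$ strictly smaller; the key identity then writes $\psi \iota(\bp{r}{s})$ as a $\delta$-exact term plus $\psi \iota$-values on arguments of strictly smaller total size, to which the inductive hypothesis applies (so these too are $\delta$-exact, and the right $\prelie$-module property of $\delta$ repackages the whole expression as a single $\delta$-image). The few remaining cases with $|r|, |s| \leq 2$ are verified by direct computation, the archetypal one being $\bp{x}{y \sous z}$ already illustrated in the text.

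For property (\ref{property_z}), I apply $\delta$ to the proposed right-hand side: since $\delta$ is a morphism of right $\prelie$-modules, the image equals $\psi \iota(r \wedge s) \circ_a u - \psi \iota(\# \wedge u) \circ_\# \Gamma_a(r \wedge s) + \delta(\Gamma_a(r \wedge s) \wedge u)$, which by the key identity is $\psi \iota((r \wedge s) \circ_a u) = \delta(\rho((r \wedge s) \circ_a u))$. Hence the right-hand side lies in $\rela_{v=1}$, and by injectivity of $\delta$ on that space it coincides with $\rho((r \wedge s) \circ_a u)$. The main obstacle I foresee is the clean derivation of the key identity: one has to recognise that the two boundary contributions $a \sous \Gamma_a(r \wedge s)$ and $\# \sous u$, extracted respectively from the two $\Gamma$-lemmas, recombine precisely into the single correction $\delta(\Gamma_a(r \wedge s) \wedge u)$.
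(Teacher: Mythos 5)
Your reduction of the theorem to a single ``key identity'' is sound, and your proof of that identity --- splitting $\sum_j j\sous\Gamma_j\bigl((r\wedge s)\circ_a u\bigr)$ over $j\in(r\sqcup s)\setminus\{a\}$ and $j\in u$ via the two $\Gamma$-lemmas, then recombining the two boundary terms $u\sous\Gamma_a(r\wedge s)$ and $\Gamma_a(r\wedge s)\sous u$ into $\delta\bigl(\Gamma_a(r\wedge s)\wedge u\bigr)$ --- is exactly the computation the paper performs to establish \eqref{property_z} and Remark \ref{cruciale}. Your derivation of \eqref{property_z} from the key identity and the injectivity of $\delta$, and your reformulation of existence as ``$\psi\iota$ lands in the image of $\delta$'' (which makes $\rho=\delta^{-1}\psi\iota$ automatically a well-defined morphism once checked on spanning elements), are both correct and clean.

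The gap is in the induction for existence. You assert that any factor with at least three vertices admits a decomposition $s=s'\circ_a u$ with both $s'$ and $u$ strictly smaller. This fails for corollas: if $s=B^+_b(a_1,\dots,a_k)$ with $k\ge 2$, then $s'\circ_a u$ is a single tree only when $|u|=1$ (whence $|s'|=|s|$) or when $a$ is a leaf of $s'$ with $|s'|\ge 2$, in which case $u$ occurs as the full subtree of the result at a non-root vertex --- but every such subtree of a corolla is a singleton. So no admissible decomposition exists, and your induction never reaches elements such as $a\wedge B^+_b(c,d)$ (here $|r|=1$ but $|s|=3$, so it is not among your finitely many directly checked cases either); the same obstruction blocks every $r\wedge s$ whose factors of size $\ge 3$ are all corollas. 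This is exactly the difficulty the paper's Third Step addresses: one writes $B^+_b(T_1,\dots,T_k)=B^+_*(T_2,\dots,T_k)\circ_*(b\sous T_1)-\sum_\alpha T_\alpha$, where the $T_\alpha$ have strictly smaller root-valence, and runs a secondary induction on root-valence, using the linearity of $\psi\iota$ and of the condition of membership in the image of $\delta$. Your framework absorbs this fix without change elsewhere, but the idea is missing and the induction as stated does not close.
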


\begin{proof}
  Let us start by remarking that the uniqueness of $\rho$ is clear
  because $\delta$ is an injection.

  Assuming now for a moment that $\rho$ has been defined, let us prove the
  last statement, using uniqueness.  Let us apply $\delta$ to the
  right side of \eqref{property_z} :
  \begin{equation}
    \sum_{j\in r,s} (j \sous \Gamma_j(r\wedge s)) \circ_i t- \sum_{j\in \# \sqcup t} (j \sous \Gamma_j(\# \wedge t)) \circ_\# \Gamma_i(r\wedge s)+ [\Gamma_i(r\wedge s), t].
  \end{equation}
  The term with $j=i$ in the first sum and the term with $j=\#$ in the
  second sum annihilates with the bracket term. One can then use Lemma
  \ref{lemma_gamma_2} to rewrite the second sum and obtain
  \begin{equation}
    \sum_{ \genfrac{}{}{0pt}{}{j\in r,s}{j\not=i} } j \sous \Gamma_j((r\wedge s) \circ_i t)+\sum_{j\in t} j \sous \Gamma_j((r\wedge s) \circ_i t),
  \end{equation}
  which is exactly $\psi(\iota((r \wedge s)\circ_i t))$.

  \begin{remark}
    \label{cruciale}
    One can also interpret this argument as follows: if the equation
    \eqref{carre} is satisfied by the terms entering the right hand
    side of \eqref{property_z}, then it is also satisfied by the left
    hand side.
  \end{remark}

  Let us now enter the existence proof of $\rho$.

  It is enough to define a morphism from $\Lambda^2 \prelie$ to
  $\rela_{v=1}$ satisfying \eqref{carre}, as it will then
  automatically pass to the quotient $\prelieh$.

  As the set of elements $T \wedge T'$ (for rooted trees $T$ and $T'$
  with disjoint indices) spans $\Lambda^2 \prelie$, it is sufficient
  to define $\rho(T \wedge T')$.

  The definition is by induction on the cardinality of the finite set $I$,
  and, at fixed cardinality, by a four-steps process for pairs of trees
  of increasing generality.

  \textsc{First Step.}

  Let us define $\rho$ when $I$ has cardinality $2$ or $3$ as
  \begin{equation}
    \label{defi_rho2}
    \rho(a \wedge b)=a \wedge b,
  \end{equation}
  and
  \begin{equation}
    \label{defi_rho3}
    \rho(a \wedge (b \sous c))=a\wedge (b\sous c)-c\wedge (b \sous a).
  \end{equation}
  One can easily check that indeed $\delta \rho = \psi \iota$ in these cases.

  \textsc{Second Step.}

  Assume now that $T=a$ and that $T'=b \sous T''$ where $T''$ has at
  least two vertices.

  One then defines $\rho(T \wedge T')$ using induction on the number of
  vertices by the following formula:
  \begin{equation}
    \label{defi_rho_step2}
    \rho(a\wedge (b \sous T''))=\rho(a \wedge (b\sous *))\circ_* T''-\rho(\#\wedge T'')\circ_\# \Gamma_*(a \wedge (b\sous *))+\Gamma_*(a \wedge (b\sous *))\wedge T''.
  \end{equation}
  As this formula is an instance of formula \eqref{property_z}, one
  deduces from Remark \ref{cruciale} that \eqref{carre} holds in this case.

%  On the one hand, one
%   has
%   \begin{equation}
%     \iota(\psi(a\wedge (b \sous T'')))=\sum_{j\in \{a,b\}\cup T''} j \sous \Gamma_j(a\wedge b \sous T'').
%   \end{equation}
%   On the other hand, one finds that the image by $\delta$ of the right hand
%   side of \eqref{defi_rho_step2} is
%   \begin{equation}
%     \sum_{j\in \{a,b,*\}} j\sous \Gamma_j(a \wedge b\sous *)\circ_* T''- \sum_{j\in \{\#\}\cup T} j \sous \Gamma_j(\#\wedge T'')\circ_\# \Gamma_*(a \wedge b\sous *)+[\Gamma_*(a \wedge b\sous *), T''].
%   \end{equation}
%   The terms with $j=*$ and $j=\#$ annihilates with the bracket. Then use Lemma \ref{lemma_gamma_2}

  \textsc{Third Step.}
  
  Assume now that $T=a$ and that $T'$ has root-valence at least
  $2$. One write $T'=B^+_b(T_1,T_2,\dots,T_k)$. Let
  $T''=B^+_*(T_2,\dots,T_k)$. In this case, one has
  \begin{equation}
    T'=T''\circ_* (b\sous T_1)-\sum_{\alpha} T_\alpha,
  \end{equation}
  where the sums runs over terms of smaller root-valence.
  
  One then defines $\rho(T \wedge T')$ using induction on the
  root-valence of $T'$ by the following formula:
  \begin{multline}
    \label{defi_rho_step3}
    \rho(a\wedge T')=\rho(a \wedge T'')\circ_* (b\sous T_1)-\rho(\# \wedge (b\sous T_1))\circ_{\#}\Gamma_{*}(a \wedge T'')\\+\Gamma_{*}(a \wedge T'')\wedge (b \sous T_1)-\sum_\alpha \rho(a \wedge T_\alpha)
  \end{multline}

  As this formula is an instance of formula \eqref{property_z}, one
  deduces from Remark \ref{cruciale} that \eqref{carre} holds in this case.

%   Let us now check that \eqref{carre} holds in this case. On the one
%   hand, one has
%   \begin{equation}
%     \delta(\rho(a\wedge T'))=\sum_{j\in \{a,b\}\cup T_1,T_2,\dots,T_k} j\sous \Gamma_j(a \wedge T')
%   \end{equation}
  
%   On the other hand, one finds that the image by $\delta$ of the right hand
%   side of \eqref{defi_rho_step3} is
%   \begin{multline}
%     \sum_{j\in \{a,*\}\cup T_2,\dots,T_k} j\sous \Gamma_j(a \wedge T'')\circ_* (b\sous T_1)-\sum_{j\in \{b,\#\}\cup T_1} j \sous \Gamma_j(\# \wedge (b\sous T_1))\circ_{\#}\Gamma_{*}(a \wedge T'')\\+[\Gamma_{*}(a \wedge T''), (b \sous T_1)]-\sum_{j\in  \{a,b\}\cup T_1,T_2,\dots,T_k}\sum_\alpha \rho(a \wedge T_\alpha)
%   \end{multline}
%   The terms with $j=*$ or $j=\#$ annihilates with the bracket and there remains
%   \begin{multline}
%     \sum_{j\in \{a\}\cup T_2,\dots,T_k} j\sous \Gamma_j(a \wedge T'')\circ_* (b\sous T_1)-\sum_{j\in \{b\}\cup T_1} j \sous \Gamma_j(\# \wedge (b\sous T_1))\circ_{\#}\Gamma_{*}(a \wedge T'')\\-\sum_{j\in  \{a,b\}\cup T_1,T_2,\dots,T_k}\sum_\alpha \rho(a \wedge T_\alpha)
%   \end{multline}

%   To conclude, explain why both sides are the same TO DO

%  This implies also that the definition does not depend on the choice of $T_1$.

  \textsc{Fourth Step.}

  Assume now that neither $T$ nor $T'$ is a singleton.
  
  One then defines $\rho(T \wedge T')$ by the following formula:
  \begin{equation}
    \label{defi_rho_step4}
    \rho(T\wedge T')=\rho(T\wedge *)\circ_* T'+\rho(\# \wedge T')\circ_\# T - T\wedge T'.
  \end{equation}

  As this formula is an instance of formula \eqref{property_z}, the
  equation \eqref{carre} holds in this case by Remark \ref{cruciale} .

\end{proof}

For example, one can compute in this way that
\begin{equation}
  \rho(\bp{a}{b\sous(c\sous d)})=a \wedge b\sous(c\sous d) - c \sous d \wedge b \sous a - d \wedge  c\sous(b\sous a).
\end{equation}

\begin{theorem}
  \label{rho_n}
  The morphism $\rho$ satisfies $\bp{}{}(\rho (x)) = (n-1)\, x$ for
  every $x$ in $\prelieh(I)$, where $n=|I|$.
\end{theorem}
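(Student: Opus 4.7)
The plan is to prove $\bp{}{}(\rho(x))=(n-1)x$ by induction on $n=|I|$, following the four-step construction of $\rho$ in the proof of Theorem \ref{defi_rho}. The base cases $n=2$ and $n=3$ are direct from \eqref{defi_rho2} and \eqref{defi_rho3}: for $n=2$ one has $\bp{}{}\rho(a\wedge b)=\bp{a}{b}$, and for $n=3$ the expression $\bp{a}{b\sous c}-\bp{c}{b\sous a}$ collapses to $2\bp{a}{b\sous c}$ by \eqref{rule_prelie1}.

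For the inductive step, the key observation is that Steps 2, 3, and 4 of the construction all present $\rho$ of the target element as an instance of the master identity \eqref{property_z}. Applying $\bp{}{}$ to both sides of \eqref{property_z}, and using that $\bp{}{}\colon\rela\to\prelieh$ is a morphism of right $\prelie$-modules, gives
\begin{equation*}
\bp{}{}\rho((r\wedge s)\circ_i t)=\bp{}{}\rho(r\wedge s)\circ_i t-\bp{}{}\rho(\#\wedge t)\circ_\#\Gamma_i(r\wedge s)+\bp{\Gamma_i(r\wedge s)}{t}.
\end{equation*}
Since $r\wedge s$ (of size $|r\sqcup s|$) and $\#\wedge t$ (of size $|t|+1$) are both strictly smaller than $(r\wedge s)\circ_i t$ (of size $n=|r\sqcup s|+|t|-1$), the induction hypothesis gives $\bp{}{}\rho(r\wedge s)=(|r\sqcup s|-1)\bp{r}{s}$ and $\bp{}{}\rho(\#\wedge t)=|t|\bp{\#}{t}$. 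Two elementary identities then produce a telescoping: substitution at the singleton placeholder yields $\bp{\#}{t}\circ_\#\Gamma_i(r\wedge s)=\bp{\Gamma_i(r\wedge s)}{t}$, while the defining property of $\Gamma$ together with the antisymmetry of $\wedge$ gives $\bp{r}{s}\circ_i t=\bp{i}{\Gamma_i(r\wedge s)}\circ_i t=\bp{t}{\Gamma_i(r\wedge s)}=-\bp{\Gamma_i(r\wedge s)}{t}$. The coefficient of $x=\bp{r}{s}\circ_i t$ then becomes $(|r\sqcup s|-1)+(|t|-1)=n-1$, as required.

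The only real subtlety lies in Step 3, where \eqref{defi_rho_step3} contains extra correction terms $-\sum_\alpha\rho(a\wedge T_\alpha)$, with $T_\alpha$ of the same size as $T'$ but strictly smaller root-valence at the root. A secondary induction on root-valence handles these: by the secondary induction hypothesis, $\bp{}{}\rho(a\wedge T_\alpha)=(n-1)\bp{a}{T_\alpha}$, which exactly cancels the contributions to $\sum_\alpha\bp{a}{T_\alpha}$ produced by the three main terms of \eqref{defi_rho_step3}. The main obstacle is therefore organizational rather than substantive---keeping the $(|I|,\text{root-valence})$ induction well-founded through all four cases---while the algebraic heart of the proof is the telescoping identity above.
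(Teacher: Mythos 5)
Your proof is correct and follows essentially the same route as the paper: check small cases, then verify that the identity is preserved by the master formula \eqref{property_z}, using the right-$\prelie$-module property of $\bp{}{}$ together with the two telescoping identities $\bp{\#}{t}\circ_\#\Gamma_i(r\wedge s)=\bp{\Gamma_i(r\wedge s)}{t}$ and $\bp{r}{s}\circ_i t=-\bp{\Gamma_i(r\wedge s)}{t}$. Your treatment is in fact somewhat more explicit than the paper's, which compresses the telescoping into one line and leaves the handling of the Step 3 correction terms $-\sum_\alpha\rho(a\wedge T_\alpha)$ implicit in the phrase ``by the proof of Th.~\ref{defi_rho}.''
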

\begin{proof}
  The property is easy to check for small $n$. By the proof of
  Th. \ref{defi_rho}, it is then enough to check that this property is
  preserved by the formula \eqref{property_z} in the following sense:
  if it is true for the terms entering the right-hand side, it is true
  for the left-hand side.

  Applying $\bp{}{}$ on the right-hand side of
  \eqref{property_z}, one finds
  \begin{equation}
    (|r| + |s| - 1) \bp{r}{s} \circ_i t - |t| \bp{\#}{t} \circ_\# \Gamma_i(r\wedge s)+ \bp{\Gamma_i(r\wedge s)}{t}.    
  \end{equation}
  This can be rewritten as
  \begin{equation}
    (|r| + |s| + |t| - 2) \bp{r}{s} \circ_i t,
  \end{equation}
  which is indeed the correct value for the left-hand side.
\end{proof}

\subsection{Proof of the isomorphism}

By Theorem \ref{defi_rho}, we therefore have a morphism
$(\rho,\psi,\mu)$ from the short exact sequence
\begin{equation}
  0 \to \prelieh \stackrel{\iota}{\longrightarrow} \perm\hada \prelieh \stackrel{p}{\longrightarrow} \refl\hada\prelieh \to 0
\end{equation}
to the short exact sequence
\begin{equation}
  \label{ses_small_bis}
  0 \to \rela_{v=1} \stackrel{\delta}{\longrightarrow} \prelie_{v=1} \stackrel{\pi}{\longrightarrow} \indec_{\geq 2} \to 0,
\end{equation}
where $\mu$ is defined as the quotient morphism from
$\refl\hada\prelieh$ to $\indec_{\geq 2}$.

\begin{proposition}
  The morphism $\rho$ is injective.
\end{proposition}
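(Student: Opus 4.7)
The plan is to deduce injectivity of $\rho$ as an immediate consequence of Theorem \ref{rho_n}, which asserts that $\bp{}{}(\rho(x)) = (n-1)\,x$ for every $x \in \prelieh(I)$ with $n = |I|$. This identifies the composition $\bp{}{} \circ \rho$ with a scalar multiplication operator on each arity component.

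First I would observe that $\prelieh$, being a primitive of the $\sym$-module $\prelie$ (so that $\prelieh(n+1) \simeq \prelie(n)$), is concentrated in arities $n \geq 2$, since $\prelie(0) = 0$. Consequently, for every $I$ on which $\prelieh(I)$ can be nonzero, the integer $n - 1 = |I| - 1$ is a strictly positive rational number, hence invertible in $\QQ$.

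Next I would note that the restriction of the anticyclic pairing $\bp{}{}$ to $\rela_{v=1}$ constructed in the preceding subsection fits into the commutative diagram displayed at the start of section \ref{proofiso}, so that $\bp{}{} \circ \rho$ makes sense as an endomorphism of $\prelieh(I)$. By Theorem \ref{rho_n}, this endomorphism is multiplication by the nonzero scalar $n - 1$, and is therefore an isomorphism of $\QQ$-vector spaces. In particular it is injective, which forces $\rho$ itself to be injective on each $\prelieh(I)$, and hence on all of $\prelieh$.

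There is no real obstacle here: the entire content of the statement has already been packaged into the character formula $\bp{}{}(\rho(x)) = (n-1)\,x$. The only thing worth being careful about is the rationality assumption (so that $n-1$ is invertible) and the range of arities in which $\prelieh$ is supported; both are immediate in the setting of the paper.
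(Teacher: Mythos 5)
Your proof is correct and follows exactly the paper's route: the paper's own proof consists of the single sentence ``This follows from Th.~\ref{rho_n}'', together with the remark that working over $\QQ$ is necessary, which is precisely the invertibility of $n-1$ that you spell out. No differences worth noting.
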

\begin{proof}
  This follows from Th. \ref{rho_n}.
\end{proof}

Note that is is necessary here to work over the field $\QQ$.

\begin{proposition}
  \label{iso_ses}
  The triple $(\rho,\psi,\mu)$ is an isomorphism of short exact sequences.
\end{proposition}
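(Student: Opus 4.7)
The plan is to use the five lemma after upgrading the two facts already in hand: $\psi$ is an isomorphism (section \ref{milieu_iso}) and $\rho$ is injective (by the preceding proposition, which rests on Theorem \ref{rho_n}). First I would check that $\mu$ is well defined and makes the right-hand square commute. The commutativity of the left-hand square is precisely the defining property \eqref{carre} of $\rho$, so $\pi\psi$ annihilates $\iota(\prelieh)=\ker p$, and therefore factors uniquely through $p$ to yield a morphism $\mu\colon\refl\hada\prelieh\to\indec_{\geq 2}$. Surjectivity of $\mu$ is then automatic from the surjectivity of $\pi$ and $\psi$, so the real issue is injectivity of $\mu$, equivalently surjectivity of $\rho$.

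The main step is to establish that surjectivity. For this I would appeal to the $\sym_n$-module isomorphism $\indec(n)\simeq\refl(n)\otimes\prelieh(n)$ for $n\geq 2$ from \cite[Th. 5.3]{free_chapoton}, recalled in the introduction. Taking dimensions in the two short exact sequences of the diagram and using that $\psi$ already identifies their middle terms, this identity forces $\dim\rela_{v=1}(n)=\dim\prelieh(n)$ for every $n$. Since $\rho$ is an injection of $\QQ$-vector spaces of equal finite dimension in each arity, it is bijective. With $\rho$ and $\psi$ both known to be isomorphisms, the five lemma now delivers $\mu$ as an isomorphism, finishing the proof.

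The only non-routine ingredient is the character-level input from \cite[Th. 5.3]{free_chapoton}; what the explicit $\rho$ constructed in this paper does is realise that previously-known abstract isomorphism by an explicit formula. The need to work over a field of characteristic zero, as mentioned in the note preceding the proposition, enters here because the left inverse to $\rho$ supplied by Theorem \ref{rho_n} involves dividing by $n-1$; without that one could produce torsion phenomena at small primes.
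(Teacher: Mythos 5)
Your argument is correct and follows essentially the same route as the paper: both rest on the injectivity of $\rho$, the fact that $\psi$ is an isomorphism, and the equality of dimensions of the right-most terms (the content of \cite[Th.~5.3]{free\_chapoton}, namely $\dim\indec(n)=\dim(\refl\hada\prelieh)(n)=(n-1)^{n-1}$), which forces $\rho$ to be bijective and then $\mu$ as well. Your extra remarks on the commutativity of the squares and on the role of $\QQ$ are accurate but not points where the paper's proof differs.
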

\begin{proof}
  We already know that $\psi$ is an isomorphism (\S\,\ref{milieu_iso})
  and that $\rho$ is injective.

  To conclude, it is enough to note that the dimensions of the
  right-most terms are the same, both given by
  \begin{equation}
    (n-1)^{n-1}
  \end{equation}
  for $n \geq 2$, where $n=|I|$. Therefore the dimensions of the left-most terms
  coincide. This implies that $\rho$ is an isomorphism, hence the
  statement.
\end{proof}

\begin{corollary}
  \label{retrouve}
  One therefore has an isomorphism $\mu$:
  \begin{equation}
    \indec \simeq S_1\oplus\refl\hada \prelieh.
  \end{equation}
\end{corollary}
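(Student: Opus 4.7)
The plan is to decompose $\indec$ as an $\sym$-module into its arity-$1$ component and the complementary piece $\indec_{\geq 2}$, identify each separately, and then reassemble. Since the definition of $\indec_{\geq 2}$ (as the sub-$\sym$-module obtained by removing the degree-$1$ component of $\indec$) already gives a splitting $\indec = \indec(1) \oplus \indec_{\geq 2}$ of $\sym$-modules, it only remains to pin down the two summands on the right-hand side.

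First I would handle the arity-$1$ piece. From the defining exact sequence
\begin{equation*}
  \lie_{\geq 2} \circ \prelie \stackrel{\gamma}{\longrightarrow} \prelie \stackrel{\pi}{\longrightarrow} \indec \to 0,
\end{equation*}
the image of $\gamma$ vanishes in arity $1$, because every element of $\lie_{\geq 2} \circ \prelie$ has arity at least $2$. Hence $\indec(1) \simeq \prelie(1) \simeq \QQ$, which is precisely $S_1$ by the conventions of the paper.

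Second, I would invoke Proposition \ref{iso_ses}, which supplies an isomorphism $\mu \colon \refl\hada\prelieh \stackrel{\sim}{\longrightarrow} \indec_{\geq 2}$. Since $\refl(1) = \QQ/\QQ = 0$, the $\sym$-module $\refl\hada\prelieh$ is automatically concentrated in arities $\geq 2$, so no truncation is needed on the source. Assembling the two summands then gives
\begin{equation*}
  \indec \;=\; \indec(1) \oplus \indec_{\geq 2} \;\simeq\; S_1 \oplus \refl\hada\prelieh,
\end{equation*}
as claimed. There is essentially no obstacle in this step: the corollary is pure packaging of Proposition \ref{iso_ses}, and the only thing to verify by hand is that the discrepancy between $\indec$ and $\indec_{\geq 2}$ is exactly the single arity-$1$ summand $S_1$, which comes for free from the absence of arity-$1$ generators in $\lie_{\geq 2}$.
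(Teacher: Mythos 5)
Your proposal is correct and follows essentially the same route as the paper: the corollary is deduced from Proposition \ref{iso_ses} together with the observation that the degree-$1$ component of $\indec$ is $S_1$. You merely spell out in more detail (via the vanishing of $\gamma$ in arity $1$) what the paper states in one line.
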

\begin{proof}
  One just need to check what happens in degree $1$. The component of
  degree $1$ of the $\sym$-module $\indec$ is just $S_1$.
\end{proof}

This equivalence has been proved in \cite[Th. 5.3]{free_chapoton} by a
computation of characters. We have obtained here an isomorphism that
explains why the characters are the same.

\section{Inclusion of $\lieh$ in $\perm \hada \prelieh$}

Let us now define a map $\theta$ from $\lieh$ to $\perm \hada \prelieh$.

%An element of $\lieh$ is $\bl{u}{v}$, where $u$ and $v$ are in $\lie$.

As $\lie$ is a cyclic operad with $\lieh$ as cyclic structure, there
is a surjective morphism of right $\prelie$-modules
\begin{equation}
  S^2 \lie \to \lieh.
\end{equation}
We will first define $\theta$ on $S^2 \lie$ and then check that it is
well defined on the quotient $\lieh$.

From the morphism of operads $\varphi$ from $\lie \to \prelie$, one
has a map $S^2 \varphi$:
\begin{equation}
  S^2 \lie \to S^2 \prelie.
\end{equation}

Let $\ell$ be an element of $S^2 \lie$. Then one decomposes its image
by $S^2 \varphi$ in $S^2 \prelie$ according to the roots of the trees:
\begin{equation}
  S^2 \varphi(\ell)=\sum_{a,b}\mathbf{c}_{a,b} T_a T_b,
\end{equation}
where $T_\#$ denotes a tree with root $\#$.
Then one can define
\begin{equation}
  \theta(\ell)=\sum_{a,b} \mathbf{c}_{a,b} (a-b) \otimes \bp{T_a}{T_b} 
\end{equation}
with values in $\perm\hada \prelieh$.

%, $\refl$ as a subspace of $\perm$.

\begin{remark}
  \label{image_in_reflex}
  Let us note that the image of $\theta$ is contained in the image of
  the section from $\refl\hada \prelieh$ to $\perm\hada \prelieh$.
\end{remark}

% One can as well define $\theta'$
% \begin{equation}
%   \theta'(\ell)=\sum_{a,b} [a-b] \bp{T_a}{T_b} 
% \end{equation}
% into $\refl$ as a quotient of $\perm$.

\begin{proposition}
  The map $\theta$ is well defined on $\lieh$.
\end{proposition}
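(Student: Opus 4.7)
The plan is to show that $\theta$ annihilates the sub-right-$\lie$-submodule $K_L \subset S^2\lie$ that defines $\lieh$. The key observation is that $\theta$ factors as $\tilde\theta \circ S^2\varphi$, where $\tilde\theta\colon S^2\prelie \to \perm \hada \prelieh$ is defined by the same root-decomposition formula applied to pairs of rooted trees. Since $K_L$ is the right $\lie$-submodule generated by $\bl{[x,y]}{z} - \bl{x}{[y,z]}$, and since $\varphi$ converts Lie brackets into antisymmetrised pre-Lie products, the image $S^2\varphi(K_L)$ is spanned by elements of the form
\begin{equation*}
  \bl{P\sous Q - Q\sous P}{R} - \bl{P}{Q\sous R - R\sous Q}
\end{equation*}
with $P,Q,R \in \prelie$ having disjoint indices. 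By multilinearity of $\tilde\theta$ we may further assume $P$, $Q$, $R$ are single rooted trees with respective roots $a$, $b$, $c$.

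Applying $\tilde\theta$ and using that $X\sous Y$ always carries the root of $X$, the image becomes a combination of four terms of the shape $(\cdot - \cdot)\otimes \bp{\cdot}{\cdot}$. The antisymmetry relation \eqref{rule_prelie1} then rewrites each $\bp{}{}$ so that the four resulting brackets can all be expressed in terms of the three cyclically-arranged quantities
\begin{equation*}
A = \bp{P}{Q\sous R}, \quad B = \bp{Q}{R\sous P}, \quad C = \bp{R}{P\sous Q}.
\end{equation*}
After collecting coefficients, every remaining contribution acquires the same scalar $\lambda \in \{a-b,\,b-c,\,c-a\}$, and the total reduces to $\lambda \otimes (A + B + C)$. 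The cyclic three-term identity \eqref{rule_prelie3} then gives $A + B + C = 0$ in $\prelieh$, so the whole expression vanishes.

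The only real obstacle is bookkeeping: one must carefully track which tree carries the root through each pre-Lie product, and apply the antisymmetry \eqref{rule_prelie1} with consistent signs when rewriting all bracketed quantities in terms of $A$, $B$, $C$. Conceptually, the computation says that the cyclic identity \eqref{rule_prelie3} in $\prelieh$ is precisely the ingredient needed to realise the Lie invariance relation as a kernel element of $\theta$.
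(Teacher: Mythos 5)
Your proposal is correct and follows essentially the same route as the paper: both reduce to checking that the generating relation $\bl{[m_1,m_2]}{m_3}=\bl{m_1}{[m_2,m_3]}$ of $K_L$ is killed, by decomposing according to roots and invoking the anticyclic relations (the paper uses \eqref{rule_prelie1} and \eqref{rule_prelie2} to transform one side into the other, while you use \eqref{rule_prelie1} together with the derived cyclic identity \eqref{rule_prelie3}, which is equivalent). The bookkeeping you defer does work out exactly as you claim: the difference of the two sides collapses to $(c-a)\otimes\bigl(\bp{P}{Q\sous R}+\bp{Q}{R\sous P}+\bp{R}{P\sous Q}\bigr)=0$.
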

\begin{proof}
  One just has to check that relation \eqref{rule_lie} holds.

  Consider the image of $m_1 [m_2,m_3]$. This is
  \begin{equation}
    \label{lhs_theta}
    \sum_{a,b,c} (a-b)\otimes \bp{\varphi(m_1)_a}{\varphi(m_2)_b\sous \varphi(m_3)_c}-(a-c)\otimes \bp{\varphi(m_1)_a}{\varphi(m_3)_c\sous \varphi(m_2)_b}.
  \end{equation}
  Consider now the image of $[m_1,m_2] m_3$. This is
  \begin{equation}
    \sum_{a,b,c} (a-c)\otimes \bp{\varphi(m_1)_a\sous \varphi(m_2)_b}{\varphi(m_3)_c}-(b-c)\otimes \bp{\varphi(m_2)_b\sous \varphi(m1)_a}{\varphi(m_3)_c}.
  \end{equation}
  Then one can rewrite this using the anticyclic structure of $\prelieh$, thanks to \eqref{rule_prelie1} and \eqref{rule_prelie2}, to obtain
    \begin{multline*}
    \sum_{a,b,c} (a-c)\otimes \bp{\varphi(m_3)_c\sous \varphi(m_2)_b}{\varphi(m_1)_a}\\-(a-c)\otimes \bp{\varphi(m_2)_b\sous \varphi(m_3)_c}{\varphi(m_1)_a}\\+(b-c)\otimes \bp{\varphi(m_2)_b\sous \varphi(m_3)_c}{\varphi(m_1)_a}.
  \end{multline*}
  This is exactly \eqref{lhs_theta}.
\end{proof}

\begin{proposition}
  The map $\theta$ is injective. The composite map $p \theta$ is also injective.
\end{proposition}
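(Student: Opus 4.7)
The two injectivity claims are equivalent by Remark \ref{image_in_reflex}: the image of $\theta$ lies in the image of the section $s:\refl\hada\prelieh\to\perm\hada\prelieh$ of $p$, and $s$ is injective, so $\theta$ is injective if and only if $p\theta$ is. It therefore suffices to prove injectivity of $\theta$, which I will do by composing with the isomorphism $\psi$ of \S\ref{milieu_iso}.

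The key step is to establish the explicit formula
\[
\psi(\theta(\ell)) \;=\; \sum_{i\in I} i\sous\varphi(\ell_i),
\]
where, for $\ell\in\lieh(I)$ and $i\in I$, $\ell_i\in\lie(I\setminus\{i\})$ is the element given by the cyclic-Lie property $\ell=\bl{i}{\ell_i}$. Granted this formula, injectivity is immediate: the summand $i\sous\varphi(\ell_i)$ is a linear combination of rooted trees with root $i$, so the summands at distinct values of $i$ have disjoint supports in $\prelie(I)$; hence $\psi(\theta(\ell))=0$ forces each $i\sous\varphi(\ell_i)=0$, and since $\varphi$ and the operation $i\sous(-)$ are both injective, $\ell_i=0$ for all $i$ and therefore $\ell=\bl{i}{\ell_i}=0$.

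To prove the formula, lift $\ell$ to $\tilde\ell=m_1 m_2\in S^2\lie$ with $m_1\in\lie(J_1)$ and $m_2\in\lie(J_2)$. Expanding $\theta$ and using the antisymmetry of $\bp{}{}$ reorganizes $\psi(\theta(\tilde\ell))$ into $\sum_{i\in J_1} i\sous\Gamma_i\bp{(m_1)_i}{\varphi(m_2)}+\sum_{i\in J_2} i\sous\Gamma_i\bp{(m_2)_i}{\varphi(m_1)}$, where $(m_1)_i$ denotes the root-$i$ component of $\varphi(m_1)\in\prelie$. Thus the task reduces to the identity
\[
\Gamma_i\bp{(m_1)_i}{\varphi(m_2)} \;=\; \varphi(\ell_i) \qquad (i\in J_1),
\]
with the case $i\in J_2$ symmetric. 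I would prove this by induction on $|J_1|$: the base case $m_1=i$ is immediate from the defining property of $\Gamma$. For the inductive step, write $m_1=[m_1',m_1'']$ with $i\in m_1'$; then $(m_1)_i=(m_1')_i\sous\varphi(m_1'')$, and a short computation using \eqref{rule_prelie1} and \eqref{rule_prelie2} rewrites $\bp{(m_1)_i}{\varphi(m_2)}=\bp{(m_1')_i}{\varphi([m_1'',m_2])}$ in $\prelieh$. By \eqref{rule_lie}, the pair $m_1'\cdot[m_1'',m_2]$ represents the same class in $\lieh$ as $m_1\cdot m_2$, so the inductive hypothesis applied to it yields $\varphi(\ell_i)$. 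The delicate point is this interplay between the anticyclic relations \eqref{rule_prelie1},\eqref{rule_prelie2} and the cyclic relation \eqref{rule_lie}; once the formula is proved, injectivity of $\theta$ (and hence of $p\theta$) is a formality.
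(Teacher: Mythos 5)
Your argument is correct, but it takes a genuinely different and longer route than the paper's. The paper exploits the freedom in choosing a lift of $x\in\lieh(I)$ to $S^2\lie$: fixing a single index $i$ and writing $x=\bl{i}{y}$ with $y\in\lie(I\setminus\{i\})$, the definition of $\theta$ gives directly
\begin{equation*}
  \theta(x)=\sum_{j\neq i}(i-j)\otimes\bp{i}{\varphi(y)_j},
\end{equation*}
and reading off the coefficient of each $j\neq i$ in the $\perm$ factor yields $\bp{i}{\varphi(y)_j}=0$, hence $\varphi(y)_j=0$ for all $j$, hence $y=0$ by injectivity of $\varphi$; no composition with $\psi$, no induction, and no anticyclic relation beyond the tautology $\Gamma_i\bp{i}{T}=T$ is needed. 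You instead prove the lift-independent formula $\psi(\theta(\ell))=\sum_i i\sous\varphi(\ell_i)$, which rests on the identity $\Gamma_i\bp{(m_1)_i}{\varphi(m_2)}=\varphi(\ell_i)$; I checked your key step $\bp{u\sous v}{w}=\bp{u}{v\sous w-w\sous v}$, which does follow from \eqref{rule_prelie1} and \eqref{rule_prelie2}, and the induction on $|J_1|$ then goes through, so your sketch is sound once the ``short computation'' is written out. What your route buys is an explicit canonical formula for the inclusion $\lambda=\psi\theta$ of $\lieh$ into $\prelie_{v=1}$ (consistent with the paper's remark that $\lambda(\bl{x}{y})=x\sous y+y\sous x$), together with an essentially equivalent injectivity argument via the root decomposition of $\prelie$; what it costs is that a statement the paper disposes of in a few lines now depends on a compatibility between the cyclic structure of $\lie$ and the anticyclic structure of $\prelie$ that the paper never has to establish. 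The reduction of the second claim to the first via Remark \ref{image_in_reflex} is the same in both proofs.
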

\begin{proof}
  Let us prove the first statement.

  Let us fix a finite set $I$ and let $x$ be an element of $\lieh(I)$
  in the kernel of $\theta$. Let us also choose an element $i\in
  I$. There is a unique element $y$ of $\lie(I \setminus \{i\})$ such
  that
  \begin{equation}
    x=\bl{i}{y}.
  \end{equation}
  By definition, one has
  \begin{equation}
    \theta(x)=\sum_{j\not=i} (i-j) \otimes \bp{i}{\varphi(y)_j},
  \end{equation}
  where $\varphi(y)_j$ is the projection of $\varphi(y)$ on the span
  of trees with root $j$. The hypothesis $\theta(x)=0$ implies that
  for every $j\not=i$,
  \begin{equation}
    \varphi(y)_j=0.
  \end{equation}
  So we obtain
  \begin{equation}
    \sum_{j\not= i} \varphi(y)_j=0=\varphi(y).
  \end{equation}
  But $\varphi$ is injective, hence $y=0$ and $x=0$.

  The second statement follows from Remark \ref{image_in_reflex}.

\end{proof}

\section{Inclusion of $\lieh$ in $\prelie_{v=1}$ and conjectures}

Using the previous inclusion $\theta$ and Proposition \ref{iso_ses},
one gets a map $\lambda$ from $\lieh$ to $\prelie_{v=1}$, hence to
$\prelie$.

\begin{proposition}
  The map $\lambda$ is injective.
\end{proposition}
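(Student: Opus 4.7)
The plan is to unpack the definition of $\lambda$ and reduce the claim to statements already established. By construction, $\lambda$ is obtained by composing $\theta\colon\lieh\to\perm\hada\prelieh$ with the middle vertical isomorphism $\psi$ of the diagram in Proposition \ref{iso_ses}, which identifies $\perm\hada\prelieh$ with $\prelie_{v=1}$. So I would first write explicitly $\lambda=\psi\circ\theta$.

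Next, I would invoke the two ingredients separately. The previous proposition already establishes that $\theta$ is injective (in fact the stronger statement that $p\theta$ is injective, which we do not need here). Proposition \ref{iso_ses} establishes that $\psi$ is an isomorphism of $\sym$-modules. The composition of an injection with an isomorphism is an injection, hence $\lambda$ is injective.

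I do not foresee an obstacle: the construction was built precisely so that $\lambda$ factors as an iso after an injection. The only point worth mentioning is that one must be careful that the target of $\theta$ really coincides with the source of $\psi$ (both equal to $\perm\hada\prelieh$), which is immediate from the definitions in Sections \ref{milieu_iso} and the preceding section. This should fit in two or three lines of actual proof.
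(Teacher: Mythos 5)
Your proof is correct and coincides with the paper's own argument: the paper likewise observes that $\lambda$ is the composition of the injective map $\theta$ with the isomorphism $\psi$ from $\perm\hada\prelieh$ to $\prelie_{v=1}$, hence injective. Nothing further is needed.
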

\begin{proof}
  This is because $\lambda$ is the composition of $\theta$
  (injective) and the isomorphism $\psi$ from $\perm\hada \prelieh$ to $\prelie_{v=1}$.
\end{proof}

Let $M$ be the suboperad of $\prelie$ generated by the image of
$\lieh$ by $\lambda$ in $\prelie$.
\begin{conjecture}
  \label{conj_subop_gen}
  The sub-operad $M$ of $\prelie$ is a module of generators of
  $\prelie$ as a free left $\lie$-module, \textit{i.e.}
  \begin{equation}
    \prelie \simeq \lie \circ M
  \end{equation}
  and
  \begin{equation}
    M \simeq \indec.
  \end{equation}
\end{conjecture}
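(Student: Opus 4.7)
The plan is to reduce both assertions of Conjecture \ref{conj_subop_gen} to the single claim that the restricted projection $\pi|_M : M \to \indec$ is an isomorphism of $\sym$-modules. If so, then the inclusion $M \hookrightarrow \prelie$ serves as a section of $\pi$, and by the established freeness $\prelie \simeq \lie \circ \indec$ the natural composition $\lie \circ M \to \prelie$ (using the left $\lie$-module structure of $\prelie$) becomes an isomorphism, delivering both statements at once. Injectivity of $\pi|_M$ and surjectivity of $\pi|_M$ can be pursued independently.

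For the surjectivity of $\pi|_M$, I would argue by induction on arity, leveraging Corollary \ref{retrouve} which identifies $\indec$ with $S_1 \oplus (\refl \hada \prelieh)$. The arity-$1$ component $S_1$ is covered trivially. For arity $n \geq 2$, the direct images $\pi(\lambda(\lieh(n)))$ correspond, via the isomorphism diagram of Section \ref{proofiso} and Remark \ref{image_in_reflex}, to a specific subspace of $\refl(n) \hada \prelieh(n)$ coming from the image of $\lie$ inside $\prelieh$ under $\varphi$. The remaining classes would be produced by operadic compositions $\lambda(\ell_1) \circ_* \lambda(\ell_2)$ of elements from lower arities; a combinatorial lemma is then required to show that these compositions, projected through $\pi$, generate the complement. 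The explicit relation \eqref{property_z} for $\rho$ and Lemma \ref{lemma_gamma_2} should provide the calculational machinery to track how operadic composition interacts with the $\refl \hada \prelieh$ decomposition.

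For the injectivity of $\pi|_M$, the cleanest approach is to establish the companion conjecture mentioned in the introduction, namely that $M$ is the \emph{free} operad on the $\sym$-module $\lieh$. Granted freeness, injectivity follows from a dimension count: one compares the generating series of $\freeop(\lieh)(n)$, computed via trees of $\lieh$-generators, with the explicit value $\dim \indec(n) = (n-1)^{n-1}$. The main obstacle of the whole proposal lies precisely here. Showing that $M$ carries no hidden relations beyond those of the free operad appears to demand either a Koszul-duality style argument (identifying a plausible quadratic dual, or a PBW-type filtration coming from the anticyclic structure on $\prelie$) or an explicit combinatorial bijection between $\lieh$-decorated planar rooted trees and objects counted by $(n-1)^{n-1}$ such as parking functions. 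A more modest intermediate target, which would already yield the Bergeron--Loday corollary referenced in the introduction, is to verify the conjecture in its quadratic part and in low arities, giving evidence and possibly suggesting the right Koszul dual.
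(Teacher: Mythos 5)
The statement you are addressing is presented in the paper as a \emph{conjecture}; the paper offers no proof of it, so there is no proof to compare your attempt against, and your proposal does not close the gap either. Both halves of your reduction terminate in steps you explicitly leave open. For surjectivity of $\pi|_M$ you appeal to ``a combinatorial lemma'' showing that the compositions $\lambda(\ell_1)\circ_*\lambda(\ell_2)$ generate the complement of $\pi(\lambda(\lieh(n)))$ in $\indec(n)$, but you do not supply it, and this is exactly the hard content of the conjecture: a priori such compositions could lie largely in the image of $\lie_{\geq 2}\circ\prelie$, modulo which $\indec$ is defined, and hence contribute nothing new under $\pi$. Neither the relation \eqref{property_z} nor Lemma \ref{lemma_gamma_2} is known to control how $\pi(\lambda(\ell_1)\circ_*\lambda(\ell_2))$ decomposes in $S_1\oplus\refl\hada\prelieh$ via Corollary \ref{retrouve}. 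For injectivity of $\pi|_M$ you propose to first prove that $M$ is the free operad on $\lieh$ --- but that is precisely Conjecture \ref{conj_free_subop}, the paper's \emph{other} open conjecture, and the paper's own generating-series computation runs in the opposite direction, deducing \ref{conj_free_subop} from \ref{conj_subop_gen}. As written, your plan replaces one open problem by another open problem plus an unproved generation statement.

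The parts of your logical skeleton that do work are worth noting: granting surjectivity of $\pi|_M$ and freeness of $M$ on $\lieh$, the generating-series identity in the paper gives $\dim M(n)=(n-1)^{n-1}=\dim\indec(n)$, so $\pi|_M$ would be an isomorphism, and then the inclusion $M\hookrightarrow\prelie$ is a section of $\pi$ whose image generates, yielding $\prelie\simeq\lie\circ M$ from the known freeness $\prelie\simeq\lie\circ\indec$. But the two inputs to this skeleton are exactly where the difficulty lives, and neither is established here or in the paper.
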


\begin{conjecture}
  \label{conj_free_subop}
  The sub-operad $M$ of $\prelie$ is isomorphic to the free operad on
  $\lieh$, \textit{i.e.}
  \begin{equation}
    \freeop(\lieh) \simeq M.    
  \end{equation}
\end{conjecture}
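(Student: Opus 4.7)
The universal property of the free operad applied to the injection $\lambda : \lieh \hookrightarrow M \subseteq \prelie$ produces a canonical morphism $\Phi : \freeop(\lieh) \to M$, which is surjective by the very definition of $M$. The content of the conjecture is therefore the \emph{injectivity} of $\Phi$: no iterated $\prelie$-composition of elements of $\lambda(\lieh)$ produces a relation beyond those already present in $\freeop(\lieh)$.

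My approach is to reduce the injectivity of $\Phi$ to a dimension/character comparison, using Conjecture \ref{conj_subop_gen} as a stepping stone. If one grants $M \simeq \indec$ as $\sym$-modules, then Corollary \ref{retrouve} fixes $\dim M(n) = (n-1)^{n-1}$ for $n \geq 2$, and a surjection of finite-dimensional $\QQ$-vector spaces of equal dimension is automatically an isomorphism. It then suffices to verify $\dim \freeop(\lieh)(n) = (n-1)^{n-1}$, which becomes a concrete symmetric-function identity: the Frobenius characteristic of a free operad satisfies a nonlinear fixed-point equation in terms of the character of its generator, and the Whitehouse character of $\lieh$ is known explicitly, so one compares the resulting series with the explicit character of $S_1 \oplus \refl \hada \prelieh$. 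A parallel route attacks both conjectures simultaneously: compose $\lambda$ with $\pi : \prelie \to \indec$, identify the map $\lieh \to \indec$ via the explicit isomorphism $(\rho,\psi,\mu)$ of Proposition \ref{iso_ses}, and then show by induction on arity that iterated $\prelie$-compositions of elements of $\lambda(\lieh)$ span a complement to the image of $\gamma : \lie_{\geq 2} \circ \prelie \to \prelie$. Combined with the known freeness of $\prelie$ as a $\lie$-module, a clean such inductive argument would yield $M \simeq \indec$ and the freeness of $M$ on $\lieh$ in one stroke.

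The main obstacle is precisely the $\sym$-module identity $\freeop(\lieh) \simeq \indec$. The left side is defined by a nonlinear fixed-point equation in the Frobenius ring with input the Whitehouse module, while the right side is the explicit Hadamard product $S_1 \oplus \refl \hada \prelieh$ whose total dimension sequence is $(n-1)^{n-1}$. Matching the two requires either a nontrivial symmetric-function manipulation exploiting the compatibility $\lie \circ (S_1 \oplus \refl \hada \prelieh) \simeq \prelie$ from Corollary \ref{retrouve}, or, more satisfyingly, an explicit combinatorial bijection between $\lieh$-decorated trees and a distinguished family of rooted trees of root-valence $1$ selected via the section $\rho$ of Theorem \ref{defi_rho}. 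Producing such a bijection would presumably also deliver a direct proof of the Bergeron--Loday result cited in the introduction.
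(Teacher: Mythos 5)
The statement you are asked about is a \emph{conjecture}: the paper offers no proof of it, only an argument that Conjecture \ref{conj_free_subop} would follow from Conjecture \ref{conj_subop_gen} by comparing exponential generating series. Your proposal likewise does not prove the statement, and its first route is essentially the paper's conditional reduction: build the canonical surjection $\Phi:\freeop(\lieh)\to M$, grant $M\simeq\indec$ so that $\dim M(n)=(n-1)^{n-1}$, and conclude by matching dimensions. Up to that point you and the paper agree.

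Where your proposal goes astray is in locating the difficulty. You present the identity $\dim\freeop(\lieh)(n)=(n-1)^{n-1}$ as ``the main obstacle,'' requiring a nontrivial symmetric-function manipulation or a new bijection. In fact this step is the easy one and the paper dispatches it in a few lines at the level of exponential generating functions: the free operad series satisfies $x=(1-f_F(x))(-\log(1-f_F(x)))$, while Corollary \ref{retrouve} (a theorem, proved via the isomorphism of exact sequences) gives $f_{\prelie}=-\log(1-f_{\indec})$, and the classical equation $f_{\prelie}(x)=x\exp(f_{\prelie}(x))$ then shows $f_{\indec}$ satisfies the same fixed-point equation, whence $f_F=f_{\indec}$. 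No Frobenius characteristics are needed, since a surjection of finite-dimensional $\QQ$-vector spaces of equal dimension is an isomorphism. The genuine open content --- which neither you nor the paper resolves --- is that $\dim M(n)=(n-1)^{n-1}$, i.e.\ that iterated $\prelie$-compositions of $\lambda(\lieh)$ do not collapse (equivalently Conjecture \ref{conj_subop_gen}, or directly the injectivity of $\Phi$). Your ``parallel route'' merely restates this as something to be shown by induction without supplying the inductive mechanism, so it does not close the gap either.
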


In the direction of Conjecture \ref{conj_free_subop}, Loday and
N. Bergeron \cite{bergeron_loday} have proved that the suboperad of
$\prelie$ generated by $x \sous y + y \sous x$ is free. The element $x \sous y + y \sous x$ is the image by $\lambda$ of the element $\bl{x}{y}\in\lieh$.

Let us summarise these conjectures in words. The image of the inclusion of
$\lieh$ in $\prelie$ should generate a free sub-operad of
$\prelie$. This free sup-operad should give a distinguished set of
generators of $\prelie$ as a left $\lie$-module. One should therefore
have a \textbf{canonical} isomorphism
\begin{equation}
  \prelie \simeq \lie \circ \freeop(\lieh)
\end{equation}
and an isomorphism
\begin{equation}
  \freeop(\lieh) \simeq \indec \simeq S_1\oplus\refl\hada \prelieh,
\end{equation}
where the last isomorphism is Corollary \ref{retrouve}.

Let us explain now how conjecture \ref{conj_free_subop} would follow
from conjecture \ref{conj_subop_gen} by an argument of dimension.

Indeed, there exists a surjective
morphism of operads from the free operad $F$ on $\lieh$ to $M$, as $M$
is generated by $\lieh$. To prove that this is an isomorphism, it is
enough to compute the generating series. On the one hand, the
generating series $f_M$ of $M$ is defined by
\begin{equation}
  f_{\prelie}(x)=f_{\lie}(f_M(x))=-\log(1-f_M(x)).
\end{equation}
and therefore one has
\begin{equation}
  1-f_M(x)=\exp(-f_{\prelie}(x)).
\end{equation}

On the other hand, the generating series of the free operad $F$ on
$\lieh$ is the unique solution with zero constant term of the
fixed-point equation
\begin{equation}
  f_F(x)=x+f_{\lieh}(f_F(x))=x+(1-f_F(x))\log(1-f_F(x))+f_F(x).
\end{equation}
which amounts to
\begin{equation}
  x=(1-f_F(x))(-\log(1-f_F(x)).
\end{equation}

One can then use the equality
\begin{equation}
  f_{\prelie}(x)=x \exp(f_{\prelie}(x))
\end{equation}
to show that
\begin{equation}
  x=(1-f_M(x))(-\log(1-f_M(x)).
\end{equation} 
and deduce from this that one must have $f_M=f_F$.

\begin{remark}
  If these conjectures are true, they give a way to define a bigrading
  on the vector spaces $\prelie(n)$, where one degree comes from the
  free operad structure and the other from the free $\lie$-module
  structure. This seems to be related to classical polynomial
  analogues of $n^{n-1}$ and combinatorial statistics on rooted trees
  considered in \cite{dumont_ra}.
\end{remark}

\thanks{\textbf{Remerciements} : l'auteur a bénéficié du soutien du projet PEPS ``Calcul moulien et MuPAD-Combinat''.}

\bibliographystyle{plain}
\bibliography{oplibre}

\end{document}